\newtheorem{theorem}{Theorem}
\newtheorem{lemma}{Lemma}
\newtheorem{corollary}[theorem]{Corollary}
\newcommand*{\bs}{\boldsymbol}
\newcommand\ST{\rule[-1em]{0pt}{2.5em}}
\begin{document}

\begin{frontmatter}
\title{On the consistent separation of scale and variance for Gaussian random fields}
\runtitle{Separation of scale and variance}
\begin{aug}
\author{\fnms{Ethan} \snm{Anderes}\ead[label=e1]{anderes@stat.ucdavis.edu}}
\runauthor{E. Anderes}
\affiliation{University of California at Davis}
\address{Department of Statistics\\University of California at Davis\\4214 Math Sci. Bldg.\\ One Shields Ave.\\Davis, CA 95616}
\end{aug}

\begin{abstract}
We present fixed domain asymptotic results that establish consistent estimates of the variance and scale parameters for a Gaussian random field with a geometric anisotropic Mat\'ern autocovariance in dimension $d>4$. When $d<4$ this is impossible due to the mutual absolute continuity of Mat\'ern Gaussian random fields with different scale and variance (see Zhang \cite{zhang:2004}).  
Informally, when $d>4$, we show that one can estimate the coefficient on the principle irregular term accurately enough to get a consistent estimate of the coefficient on the second irregular term. These two coefficients can then be used to separate the  scale and variance.  We extend our results to the general problem of estimating a variance and geometric anisotropy for more general  autocovariance functions. 
Our results illustrate the interaction between the accuracy of estimation, the smoothness of the random field, the dimension of the observation space, and the number of increments  used for estimation. As a corollary, our results establish the orthogonality of Mat\'ern Gaussian random fields with different parameters when $d>4$. The case $d=4$ is still open.
\end{abstract}



\begin{keyword}[class=AMS]
\kwd[Primary ]{60G60}
\kwd[; secondary ]{62M30}
\kwd{62M40}
\end{keyword}

\begin{keyword}
\kwd{Mat\'ern class}
\kwd{quadratic variations}
\kwd{Gaussian random fields}
\kwd{infill asymptotics}
\kwd{covariance estimation}
\kwd{principle irregular term}
\end{keyword}

\end{frontmatter}

\section{Introduction}
A common situation in spatial statistics is when one has observations on a single realization of a random field $Y$ at a large number of spatial points $\bs t_1,\bs t_2, \ldots$ within some bounded region $\Omega\subset \Bbb R^d$. One is then is faced with the problem of predicting some quantity that depends on $Y$ at unobserved points in $\Omega$. For example, one may want to predict $\int_\Omega Y(\bs t) d\bs t$ or the derivative $Y^{\prime}(\bs t_0)$ where $\bs t_0$ is an unobserved point in $\Omega$. A common technique is to first estimate the covariance structure of $Y$, then predict using the estimated covariance. Typically, fully nonparametric estimation of the covariance is difficult since the observations are from one realization of the random field.  In this case, it is common to consider a class of covariance structures indexed by a finite number of parameters which are then estimated from the observations (see \cite{cressie:book} or \cite{chiles:book} for an introduction to spatial statistical techniques).

Two common parameters found in many covariance models are an overall scale $\alpha$ and an overall variance $\sigma^2$. The simplest example of this model stipulates that the random field $Y$ is a scale and amplitude chance by an unknown $\alpha$ and $\sigma$ of a known random field $Z$. In particular, for a spatial domain $\Omega\subset \Bbb R^d$, $Y$ is modeled as
\begin{equation} 
\label{FirstModel}
 \{ Y(\bs t)\colon \bs t\in \Omega\}\overset {\mathcal D}= \{\sigma Z(\alpha \bs t)\colon \bs t\in\Omega\} 
 \end{equation}
where $\overset{\mathcal D}=$ denotes equality of the finite dimensional distributions. In this case, $\sigma$ is an overall amplitude (in units of $Y$) and $\alpha$ is an overall spatial scale (in units of $\bs t$).
For a nice discussion of the roll of $\alpha$ and $\sigma$ in the Mat\'ern autocovariance see Section 6.5 in \cite{stein:book}.

A fundamental question is whether or not $\alpha$ and $\sigma$ are consistently estimable when the number of the observations in $\Omega$ grows to infinity.  Indeed, the answer is no in general.   This is immediate from the existence of self similar random fields that satisfy  $\{ Z(\alpha \bs t)\colon \bs t\in\Omega\}\overset{\mathcal D}=\{\alpha^{\nu}Z(\bs t)\colon \bs t\in \Omega\}$ for any $\alpha>0$ where  $\nu$ is a fixed constant. For these self-similar processes, any two pairs $(\sigma_1, \alpha_1)$ and $(\sigma_2,\alpha_2)$ that satisfy $\sigma_1^2\alpha_1^{2\nu}=\sigma_2^2 \alpha_2^{2\nu}$ give the same model in (\ref{FirstModel}).  
This problem can also be present when $Z$ is not self similar. For example, suppose $Z$ is an isotropic Ornstein-Uhlenbeck process in dimension $d\leq 3$ (see Figure \ref{fig1}). In this case, if $\sigma_1^2\alpha_1=\sigma_2^2 \alpha_2$ (i.e. $\nu=1/2$) the two models for $Y$ yield mutually absolutely continuous measures (when $d=1$ see \cite{Ibragimov}, \cite{ying:1991},  when $d=2,3$ see \cite{zhang:2004}, \cite{stein:book}) and therefore are impossible to discern with probability one when observing one realization of $Y$.   
We shall see, however, that in some cases it is possible to consistently estimate $\alpha$ and $\sigma$. Moreover, it will depend on dimension: typically the larger the dimension the more information there is to  separate $\sigma$  from $\alpha$.  Before we continue, we mention the work of Stein (see \cite{stein:Asymp:88},\cite{stein:Unif:1990}) which establishes that even if two models are mutually absolutely continuous, using the wrong model to make predictions may still yield asymptotically optimal estimates.  In fact, this phenomenon can also occur for orthogonal measures when restricting to predictors that are linear combinations of the observations (see \cite{stein:1993}).

To understand the condition $\sigma_1^2\alpha_1^{2\nu}=\sigma_2^2 \alpha_2^{2\nu}$  one can look at what is called the principle irregular term of the autocovariance function (see \cite{stein:book}). Suppose, for exposition, that there exist constants $\delta_2>\delta_1>0$ such that the covariance structure of $Z$ satisfies
\begin{equation}
\label{PrinceTerm} 
\text{cov}(Z(\bs t+\bs h),Z(\bs t))\approx  c_1 |\bs h|^{\delta_1}+ c_2|\bs h|^{\delta_2}+p(|\bs h|),\quad\text{as $|\bs h|\rightarrow 0$} 
\end{equation}
where $p$ is an even polynomial and both $\delta_1, \delta_2$ are not even integers. This model is not as restrictive as it seems and includes the Ornstein-Uhlenbeck process, the exponential autocovariance function $e^{-|\bs s-\bs t|^{\delta_1}}$ and the Mat\'ern autocovariance function (see below).
The term $c_1|\bs h|^{\delta_1}$ is often referred to as the principle irregular term and is instrumental in determining the smoothness of $Z$.  The second term, $c_2|\bs h|^{\delta_2}$, is less influential but  can have an observable effect depending on dimension and the magnitude of $\delta_2-\delta_1$.
Now, if we model $Y$ by (\ref{FirstModel}) and (\ref{PrinceTerm}) we get
\begin{equation}
\label{prince2}
\text{cov}(Y(\bs t+\bs h),Y(\bs t))\approx  c_1\sigma^2\alpha^{\delta_1} |\bs h|^{\delta_1}+ c_2\sigma^2\alpha^{\delta_2}|\bs h|^{\delta_2}+\tilde p(|\bs h|),\quad\text{as $|\bs h|\rightarrow 0$.} 
\end{equation}
Therefore for two pairs of parameters   $(\sigma_1,\alpha_1)$ and $(\sigma_2,\alpha_2)$, the condition $\sigma_1^2\alpha_1^{\delta_1}=\sigma_2^2 \alpha_2^{\delta_1}$ ensures that the covariance models for $Y$  have the same principle irregular term.
This explains the importance of  the quantity $\sigma^2\alpha^{\delta_1}$. 
In addition, if one can estimate both coefficients $c_1\sigma^2\alpha^{\delta_1}$ and  $c_2 \sigma^2 \alpha^{\delta_2}$ then it is possible to get separate estimates of $\sigma$ and $\alpha$. 
In what follows we develop consistent estimators of these two coefficients which allow consistent estimation of $\sigma$ and $\alpha$.

\begin{figure}
\centering
\subfloat[$Z(2t)$]{\includegraphics[height=1.5in]{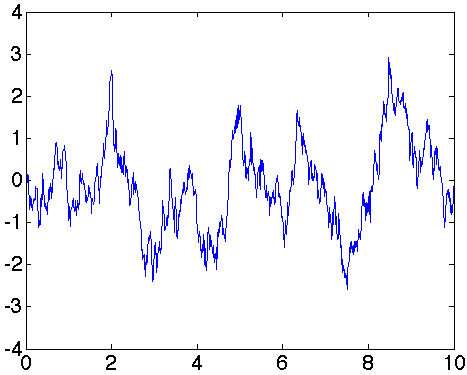}}
\qquad
\subfloat[$\sqrt{2}Z(t)$]{\includegraphics[height=1.5in]{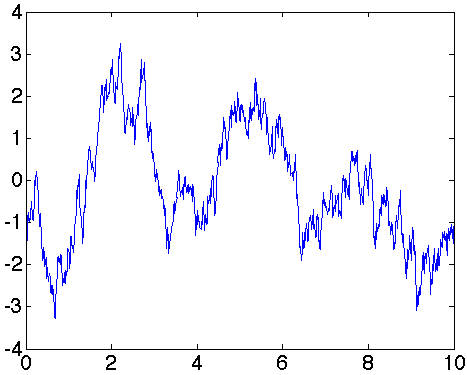}}
\caption{Independent simulations of $Z(2 t)$ and $\sqrt{2} Z(t)$, observed on a dense grid  in $[0,10]$, where $Z$ is the Ornstein-Uhlenbeck process with covariance structure $\text{cov}(Z(s), Z(t))=e^{-|s-t|}$. In $1
$, $2$ and $3$ dimensions these two processes (isotropically extended) are mutually absolutely continuous and therefore cannot be consistently distinguished under fixed domain asymptotics. Our results establish that when the dimension is greater than 4 one can distinguish the two with probability one under fixed domain asymptotics.}
\label{fig1}
\end{figure}

The majority of this paper focuses on the case when $Z$ is a mean zero, isotropic Gaussian random field which has a Mat\'ern autocovariance.   The reasons are twofold. First, the Mat\'ern autocovariance has been used extensively in spatial statistics so that results on the Mat\'ern autocovariance are of intrinsic interest alone. The second reason is that once one establishes the results for the Mat\'ern it is relatively easy to see how to extend to other covariance functions. In Section  \ref{BeyondB}, we give two examples that illustrate these extensions. 
Our Mat\'ern assumption stipulates the existence of a known $\nu>0$ such that
\begin{equation}
\label{MMMater}
\text{cov}(Z(\bs s), Z(\bs t))= \frac{|\bs s-\bs t|^\nu \mathcal K_\nu(|\bs s-\bs t|)}{2^{\nu-1}\Gamma(\nu)} 
\end{equation}
for all $\bs s,\bs t\in \Omega\subset \Bbb R^d$ where $|\cdot|$ denotes Euclidean distance and $\mathcal K_\mu$ is the modified Bessel function of the second kind of order $\nu>0$  (see \cite{AShandbook}). The parameter $\nu$ controls the mean square smoothness of the process: larger $\nu$ corresponds to smoother $Z$. The flexibility provided by the smoothness parameter $\nu$ along with the fact that it is positive definite in any dimension leads to its widespread use in spatial statistics.  

 In what follows we extend the basic model (\ref{FirstModel}) to the case when there is an unknown invertible  matrix $M$ with determinant 1 (this class of matrices we denote by $SL(d,\Bbb R)$) so that
\begin{equation} 
\label{SecondModel}
 \{ Y(\bs t)\colon \bs t\in \Omega\}\overset {\mathcal D}= \{\sigma Z(\alpha M \bs t)\colon \bs t\in\Omega\}.
 \end{equation}
The matrix $M$ is called a geometric anisotropy and is used to model a directional sheer of $Z$. The assumption that $\det M=1$ removes identifiability problems with the overall scale parameter $\alpha$.  In Section \ref{Geoo}, we construct estimates of $\sigma^2\alpha^{2\nu}$, $M$  and $\alpha$. We show that the estimates of $\sigma^2\alpha^{2\nu}$ and $M$ are strongly consistent in any dimension and the estimate of $\alpha$ is strongly consistent when $d>4$.

There is a fair amount of literature on estimating $\sigma^2\alpha^{2\nu}$ for the Mat\'ern autocovariance. In 1991, Ying \cite{ying:1991} established strong consistency and the asymptotic distribution of the maximum likelihood estimate of $\sigma^2\alpha^{2\nu}$ for the Ornstein-Uhlenbeck process when $d=1$ (which has a Mat\'ern autocovariance for $\nu=1/2$).
In 2004, Zhang \cite{zhang:2004} established that the maximum likelihood estimate of $\sigma^2\alpha^{2\nu}$ (obtained by fixing $\alpha$ and $\nu$) is strongly consistent when $d\leq 3$. In related work, Loh \cite{loh2005} shows that maximum likelihood estimates of scale and variance parameters in a non-isotropic multiplicative Mat\'ern model are consistent when $\nu=3/2$ (similar results for the Gaussian autocovariance model can be found in \cite{loh2000}). In Section 6.7 of \cite{stein:book}, Stein derives asymptotic properties of the maximum likelihood estimates of $\alpha$, $\sigma$ and $\nu$ for a periodic version of the Mat\'ern random field. For this periodic random field  all the parameters are consistently estimable when $d\geq 4$. Our results confirm these findings for $\alpha$ and $\sigma$ with the non-periodic Mat\'ern when $d>4$. The case $d=4$ is still open.

Recent work by Kaufman et al.\! \cite{kaufman:taper} and Du et al.\! \cite{Du2009} studies maximum likelihood estimates of $\sigma^2 \alpha^{2\nu}$ using a tapered Mat\'ern autocovariance when $d\leq 3$. The advantage gained by tapering is a reduction of the computational load for computing the likelihood and for computing kriging estimates.
We will see that our estimates of the same quantity, $\sigma^2 \alpha^{2\nu}$, yield strongly consistent estimates in any dimension which are \lq\lq root n" consistent and are easily computed with no maximization required.  However, our estimates depend on the grid format of the observations whereas the maximum likelihood estimates are not confined to such restrictions.  We also expect some loss of efficiency in our estimates as compared to the MLE. We hope that there is potential to combine the two estimation methods using a one-Newton-step tapered likelihood adjustment to the increment based estimate. Since our results can be easily extended, by a Lindeberg-Feller argument,  to obtain the asymptotic normality of $\widehat{\sigma^2\alpha^{2\nu}}$ when $d\leq 3$, we believe this has the potential  to mitigate any loss of efficiency and reduce the computational load for the maximum likelihood estimate. 

Finally we mention the long tradition of using squared increments to estimate properties of random fields, beginning with the quadratic variation theorem of L\'evy in 1940 (\cite{levy:qv}).  For example, increments have been used in  \cite{IstasLang:qv} and \cite{BenassiCohen98:qv} for identification of a local fractional index and in  \cite{CohenGuyon:Singularity} to identify the singularity function of a fractional process. In \cite{anderesSourav} they  are used to estimate a deformation of an isotropic random field.
For more results on the convergence of quadratic variations see, \cite{baxter:qv},  \cite{gladyshev:qv},  \cite{dudley:qv}, \cite{klein:qv}, \cite{berman:qv}, \cite{strait:qv},\cite{AdlerPyke:qv}, \cite{BenassiCohen98:qv}, \cite{GuyonLeon:qv}, \cite{CohenGuyon:Identification},  \cite{IstasLang:qv}.


\section{The geometric anisotropic Mat\'ern class}
\label{Geoo}
In this section we construct estimates of $\sigma^2\alpha^{2\nu}$, $M$  and $\alpha$ using increments of   $Y$ observed on a dense grid within $\Omega$. Using fixed domain asymptotics, we establish consistency of our estimates  under assumptions (\ref{MMMater}) and (\ref{SecondModel}) and  provide bounds on the rate of variance decay as it depends on the number of increments used, the dimension of $\Omega$ and the smoothness of $Y$ measured by $\nu$. These results will hold in any dimension. However, when the dimension is large enough ($d>4$), the second term in (\ref{prince2}) is influential enough  so that  $\alpha$ can be estimated consistently. 

If the observation region $\Omega$ is an open subset of $\Bbb R^d$ and the random field $Y$ is modeled by (\ref{MMMater}) and (\ref{SecondModel}), then $Y$ is said to be a $d$-dimensional geometric anisotropic Mat\'ern random field with parameters $(\sigma, \alpha,\nu,M)$. In this case, the covariance structure of $Y$ is $\text{cov}(Y(\bs s),Y(\bs t))=K(|M\bs s-M\bs t|)$,
 where $K$ is defined as 
\begin{equation}
\label{DefineK}
K(t)\triangleq\frac{\sigma^2 (\alpha t)^\nu}{\Gamma(\nu) 2^{\nu-1}} \mathcal K_\nu(\alpha t)  
\end{equation}
 for $t>0$ and $K(0)\triangleq \lim_{t\downarrow 0} K(t)=\sigma^2$. The function
 $\mathcal K_\nu$ is the modified Bessel function of the second kind of order $\nu>0$.  Since $|M\bs s-M\bs t|= |OM\bs s-OM \bs t|$ for any orthogonal matrix $O$, one can only identify $M$ up to left multiplication by an orthogonal matrix. To remove this identifiability problem we suppose that $M\in SL(d,\Bbb R)/SO(d,\Bbb R)$ where $SO(n, \Bbb R)$ denotes the orthogonal matrices in $SL(d,\Bbb R)$. In the theorems below, we write $M_1=_{SL/SO} M_2$ to mean that  there exists a $O\in SO(n, \Bbb R)$ such that $M_1=O M_2$, and similarly for $M_1\neq_{SL/SO} M_2$.
Operationally, however, we estimate a representer of the cosets in $SL(d,\Bbb R)/SO(d,\Bbb R)$ given by the upper triangular matrices which have positive diagonal elements and determinant 1 (that this is a representer follows from the QR factorization, see \cite{HornJohnson:book}).

  As discussed in the introduction, the principle irregular term is important in determining the sample path properties of the random field $Y$. The principle irregular term for the Mat\'ern covariance function is  
\[ G_\nu(t)\triangleq\begin{cases} 
\ST\dfrac{(-1)^{\nu+1}}{2^{2\nu-1}\Gamma(\nu)\Gamma(\nu+1)} t^{2\nu} \log t , & \text{if $\nu\in \Bbb Z$;} \\ 
\ST\dfrac{-\pi}{2^{2\nu}\sin(\nu\pi)\Gamma(\nu)\Gamma(\nu+1)}t^{2\nu}, & \text{otherwise.}
 \end{cases} \]
   where $G_{\nu}(0)$ is defined to be $0$.
Moreover,
\begin{align}
\label{PrinceTerms2} 
\text{cov} (Y(\bs t+\bs h),Y(\bs t))= \sigma^2 G_\nu(|\alpha M\bs h|) &-\nu \sigma^2 G_{\nu+1}(|\alpha M\bs h|)+ \epsilon(|\alpha M\bs h|) 
\end{align}
where $\epsilon( h)= \sigma^2 p(|h|)+ o(G_{\nu+1}(| h|))$ as $|h|\rightarrow 0$ and $p$ is an even polynomial. Notice that when $M$ is the identity matrix and $\nu\notin \Bbb Z$, this gives the expansion (\ref{prince2}) so that $\sigma^2 G_\nu(|\alpha \bs h|)$ is the first principle irregular term and $-\nu \sigma^2 G_{\nu+1}(|\alpha \bs h|)$ is the second term.

\subsection{Estimating $\sigma^2\alpha^{2\nu}$ and $M$ in any dimension}
\label{AnyDim}

Let $\Omega$ be a bounded, open subset of $\Bbb R^d$ and let $\Omega_n\triangleq \Omega\cap \{ \Bbb Z^d/n \}$.  The idea is that we will be observing $Y$ on a region, just a bit larger than $\Omega_n$, so that we can form the $m^\text{th}$ order increments of $Y$ on $\Omega_n$. These will then be used to estimate $M$ and $\sigma^2\alpha^{2\nu}$ in any dimension and additionally  $\alpha$, in dimension $d>4$.

 For a fixed nonzero vector $\bs h\in \Bbb R^d$ define the increment in the direction $\bs h$ by $\Delta_{\bs h} Y(\bs t)\triangleq Y(\bs t+\bs h)-Y(\bs t)$ and the $m^\text{th}$ iterated directional increment  $\Delta^m_{\bs h}Y(\bs t)\triangleq\Delta_{\bs h}\Delta^{m-1}_{\bs h}Y(\bs t)$. The following lemma establishes the relationship between the variance of these increments and the terms in (\ref{PrinceTerms2}) when the number of increments is sufficiently large. 

\begin{lemma}  
\label{FirstLemma}
Let $Y$ be a mean zero, geometric anisotropic $d$-dimensional Mat\'ern Gaussian random field with parameters $(\sigma, \alpha, \nu, M)$.  If $m$ is a positive integer such that $m>\nu+1$ and $\bs h\in \Bbb R^d$ is a non-zero vector, then
\begin{equation}
\label{eept}
\mathsf E(\Delta_{\bs h/n}^{m} Y(\bs t))^2 = \frac{a_\nu^{m}}{n^{2\nu}}+  \frac{b_\nu^{m}}{n^{2\nu+2}}+o(n^{-2\nu-2})
\end{equation}
 as $n\rightarrow \infty$ where 
\begin{align}
\label{AAA}
a_\nu^m &\triangleq \sigma^2\alpha^{2\nu}  |M\bs h|^{2\nu} \sum_{i,j=0}^m (-1)^{i+j}{m\choose i}{m\choose j} G_\nu(|i-j|) \\
\label{BBB}
 b_\nu^{m} &\triangleq \sigma^2\alpha^{2\nu+2}  |M\bs h|^{2\nu+2} \sum_{i,j=0}^{m} (-1)^{i+j}{m\choose i}{m\choose j} (-\nu)G_{\nu+1}(|i-j|). 
 \end{align}
\end{lemma}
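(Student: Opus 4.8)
The plan is to expand the squared $m$th increment into a double sum of covariances, substitute the principle irregular term expansion~(\ref{PrinceTerms2}), and exploit the fact that the binomial weights annihilate low--degree polynomials. First, writing $\Delta^m_{\bs h/n}Y(\bs t)=\sum_{i=0}^m(-1)^{m-i}\binom{m}{i}Y(\bs t+i\bs h/n)$ and using that $Y$ is mean zero with $\text{cov}(Y(\bs s),Y(\bs t))=K(|M\bs s-M\bs t|)$, one gets, with $w_{ij}:=(-1)^{i+j}\binom{m}{i}\binom{m}{j}$,
\begin{equation*}
\E\bigl(\Delta^m_{\bs h/n}Y(\bs t)\bigr)^2=\sum_{i,j=0}^m w_{ij}\,K\!\Bigl(\tfrac{|i-j|}{n}|M\bs h|\Bigr),
\end{equation*}
which is independent of $\bs t$ by stationarity, since $|M(i-j)\bs h/n|=(|i-j|/n)|M\bs h|$. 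Setting $c_n:=\alpha|M\bs h|/n$ and applying (\ref{PrinceTerms2}) with lag $(i-j)\bs h/n$ rewrites each summand as $\sigma^2G_\nu(c_n|i-j|)-\nu\sigma^2G_{\nu+1}(c_n|i-j|)+\epsilon(c_n|i-j|)$.

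The proof then rests on two elementary facts. (i) A binomial annihilation identity: for any polynomial $P(x,y)$ in which every monomial $x^ay^b$ has $a<m$ or $b<m$, one has $\sum_{i,j=0}^m w_{ij}P(i,j)=0$; this follows by factoring the double sum and using $\sum_{i=0}^m(-1)^i\binom{m}{i}i^k=0$ for $0\le k<m$ (with $\sum_{i=0}^m(-1)^i\binom{m}{i}i^m=(-1)^m m!$). (ii) Quasi--homogeneity of $G_\mu$: if $\mu\notin\mathbb{Z}$ then $G_\mu(ct)=c^{2\mu}G_\mu(t)$, while if $\mu\in\mathbb{Z}$ then $G_\mu(ct)=c^{2\mu}G_\mu(t)+\kappa_\mu\,(c^{2\mu}\log c)\,t^{2\mu}$ for a constant $\kappa_\mu$, so the correction term, viewed as a function of $t=|i-j|$, is a polynomial of degree $2\mu$.

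Putting these together, substitute $t=|i-j|$ into each $G_\mu(c_n t)$ and expand the even polynomial as $\sigma^2 p(c_n|i-j|)=\sum_k p_k c_n^{2k}|i-j|^{2k}$. In the sum over $(i,j)$: the quasi--homogeneity corrections of $G_\nu$ and $G_{\nu+1}$ (polynomials of degrees $2\nu$ and $2\nu+2$, both strictly below $2m$ since $m>\nu+1$) and all terms of $p$ of degree below $2m$ are annihilated by (i); the terms of $p$ of degree at least $2m$ contribute $O(n^{-2m})=o(n^{-2\nu-2})$; and what survives is exactly $\sigma^2 c_n^{2\nu}\sum_{i,j}w_{ij}G_\nu(|i-j|)$ and $-\nu\sigma^2 c_n^{2\nu+2}\sum_{i,j}w_{ij}G_{\nu+1}(|i-j|)$, which equal $a_\nu^m n^{-2\nu}$ and $b_\nu^m n^{-2\nu-2}$ respectively after using $c_n^{2\mu}=\alpha^{2\mu}|M\bs h|^{2\mu}n^{-2\mu}$ and the definitions (\ref{AAA})--(\ref{BBB}). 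Finally, for each of the finitely many pairs $i\neq j$ the nonpolynomial remainder in $\epsilon$ is $o(n^{-2\nu-2})$ (indeed it is one order beyond $G_{\nu+1}$, of size $|c_n(i-j)|^{2\nu+4}$ up to a logarithmic factor), and summing over $(i,j)$ yields (\ref{eept}).

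The main obstacle is the bookkeeping when $\nu$ is an integer, where $G_\nu$ and $G_{\nu+1}$ carry logarithms. One must check that rescaling produces precisely the polynomial corrections in (ii) and that these, having degrees $2\nu$ and $2\nu+2$, lie below the annihilation threshold $2m$; this is exactly where the hypothesis $m>\nu+1$ (rather than merely $m>\nu$) is needed, and it is the same hypothesis that makes the leftover polynomial part of $\epsilon$ of order $n^{-2m}=o(n^{-2\nu-2})$ and controls the remainder (whose genuine size is $n^{-2\nu-4}$ up to a log, using that the term past $G_{\nu+1}$ in the Mat\'ern expansion is $O(|h|^{2\nu+4}|\log|h||)$). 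The rest is the routine verification of the annihilation identity and of the normalizing constants.
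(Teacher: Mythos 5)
Your proposal is correct and follows essentially the same route as the paper's proof: expand the squared increment into a double sum of covariances, use the (quasi-)homogeneity of $G_\nu$ and $G_{\nu+1}$ to extract the factors $n^{-2\nu}$ and $n^{-2\nu-2}$, kill the logarithmic corrections (when $\nu\in\Bbb Z$) and the low-degree polynomial part of the covariance via the binomial annihilation identity, and absorb everything else into the $o(n^{-2\nu-2})$ remainder. The paper organizes the same computation slightly differently (normalizing to $\sigma=\alpha=1$, $M=I$ first and rescaling at the end), but the substance is identical.
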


Now we are in a position to  estimate the coefficient $a_\nu^m$. 
Let $\#\Omega_n$ denote the cardinality of the finite set $\Omega_n\triangleq \Omega\cap\{\Bbb Z^d/n \}$ and define
\begin{equation}
\label{DefofQQ}
 Q^m_n\triangleq  \frac{1}{\#\Omega_n} \sum_{\bs j\in\Omega_n} n^{2\nu}(\Delta_{\bs h/n}^m Y(\bs j))^2
 \end{equation}
Notice that by equation (\ref{eept}), $\mathsf E Q_n^m\rightarrow a_\nu^m$ as $n\rightarrow \infty$. In addition, since $Q_n^m$ is itself an average, one might hope that $Q_n^m$ converges to $a_\nu^m$. The following theorem shows that, indeed, this is the case. In addition, the theorem quantifies the decay of the variance of $Q_n^m$ as a function of the number of increments, the smoothness of the random field $Y$  and the dimension of the domain. The heuristic is that when the number of increments $m$ is large enough,  there is sufficient decorrelation of the summands of $Q_n^m$ to guarantee convergence as $n\rightarrow \infty$. Generally, more increments leads to more spatial decorrelation and hence a reduction in variance. However, this only holds up to a point, after which taking more increments no longer effects the rate of variance decay. Finally, the higher the dimension, the more increments one needs to take to get the best rate.

\begin{theorem}  
\label{FristThem}
 Let $Y$ be a mean zero, geometric anisotropic $d$-dimensional Mat\'ern Gaussian random field with parameters $(\sigma, \alpha, \nu, M)$ and let $\Omega$ be a bounded, open subset of $\Bbb R^d$. If $m>\nu$ then 
\begin{equation}
Q_n^m\rightarrow a_\nu^m , \quad w.p.1 
\end{equation}
as $n\rightarrow \infty$.
Moreover, there exists a constant $c>0$ such that 
\[
\text{var}\, Q_n^m \leq \begin{cases}
c\,n^{4(\nu-m)},& \text{if $4(\nu-m)>-d$;} \\
c\,n^{-d}\log n,& \text{if $4(\nu-m)=-d$;} \\
c\,n^{-d}, &  \text{if $4(\nu-m)<-d$}
\end{cases}
\]
for all sufficiently large $n$.
\end{theorem}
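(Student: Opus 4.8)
The plan is to write $Q_n^m$ as a quadratic form in the Gaussian field $Y$, reduce everything to one uniform bound on the covariance between two $m$th order increments, and then pass from an $L^2$ bound to almost sure convergence by a higher-moment estimate. Since $Y$ is mean zero and Gaussian, Isserlis' theorem gives $\mathrm{cov}\big((\Delta_{\bs h/n}^m Y(\bs i))^2,(\Delta_{\bs h/n}^m Y(\bs j))^2\big)=2\,\mathrm{cov}\big(\Delta_{\bs h/n}^m Y(\bs i),\Delta_{\bs h/n}^m Y(\bs j)\big)^2$, so that
\[
\mathrm{var}\,Q_n^m=\frac{2\,n^{4\nu}}{(\#\Omega_n)^2}\sum_{\bs i,\bs j\in\Omega_n}\mathrm{cov}\big(\Delta_{\bs h/n}^m Y(\bs i),\Delta_{\bs h/n}^m Y(\bs j)\big)^2 .
\]
With $\bs u=n(\bs i-\bs j)\in\Bbb Z^d$, the inner covariance is an iterated $2m$th order finite difference, in the direction $\bs h/n$, of the kernel $K(|M\cdot|)$ evaluated at displacement $\bs u/n$, that is, $\sum_{k,l=0}^m(-1)^{k+l}\binom mk\binom ml\,K\big(|M(\bs u+(k-l)\bs h)|/n\big)$. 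The estimate I would aim for is
\[
\big|\mathrm{cov}\big(\Delta_{\bs h/n}^m Y(\bs i),\Delta_{\bs h/n}^m Y(\bs j)\big)\big|\;\le\; C\,n^{-2\nu}\,(1+|\bs u|)^{2(\nu-m)}\,\big(1+\log(n/(1+|\bs u|))\big)
\]
uniformly over $\bs i,\bs j\in\Omega_n$ and all large $n$, with $C$ depending only on $\Omega$, $\bs h$, $M$ and the Mat\'ern parameters (the logarithmic factor is only needed when $\nu\in\Bbb Z$, and it will turn out to be harmless). This bound is the heart of the matter.

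To prove it I would split according to the size of $|\bs i-\bs j|$ against a small fixed $\varepsilon>0$ and a large fixed $C_0$. When $\bs i=\bs j$ it is Lemma \ref{FirstLemma} (and when $\nu<m\le\nu+1$ only the leading term of that computation is needed, which is valid for $m>\nu$). When $0<|\bs u|\le C_0$, Cauchy--Schwarz and stationarity of the increments give $|\mathrm{cov}|\le\mathsf E(\Delta_{\bs h/n}^m Y(\bs 0))^2\le c\,n^{-2\nu}$, and $(1+|\bs u|)^{2(\nu-m)}$ is bounded below on this finite set. When $|\bs i-\bs j|\ge\varepsilon$, the kernel $K(|M\cdot|)$ is smooth on the short segments joining the evaluation points, so a Taylor estimate on the $2m$th order difference with step $|\bs h|/n$ gives $O(n^{-2m})$ uniformly on the compact set $\overline\Omega-\overline\Omega$, which is absorbed into the claimed bound since $|\bs u|\le n\,\mathrm{diam}\,\Omega$. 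The substantive regime is $C_0\le|\bs u|\le\varepsilon n$: here every argument $|\alpha M(\bs u+(k-l)\bs h)|/n$ is small, so I would insert the local expansion of $K$ at the origin from (\ref{PrinceTerms2}), differentiated term by term, reducing the difference to those of the principal irregular term $\sigma^2 G_\nu(|\alpha M\cdot|)$, of the smoother correction $-\nu\sigma^2 G_{\nu+1}(|\alpha M\cdot|)$, of an even polynomial, and of a remainder that is $O(|\bs z|^{2\nu+4})$ near $\bs 0$. Since $G_\nu(|\alpha M x|)$ equals $|\alpha M x|^{2\nu}$ up to a constant (or $|\alpha M x|^{2\nu}\log|\alpha M x|$ when $\nu\in\Bbb Z$), a scaling argument shows that on $B(\bs u,|\bs u|/2)$ its directional derivatives of order $2m>2\nu$ are $O(|\bs u|^{2\nu-2m})$ --- with no logarithm, because $|\alpha M x|^{2\nu}$ is then a polynomial of degree $2\nu<2m$ --- so the $2m$th order difference with step $|\bs h|/n$ is $O(n^{-2m}(|\bs u|/n)^{2\nu-2m})=O(n^{-2\nu}|\bs u|^{2(\nu-m)})$. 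The $G_{\nu+1}$, polynomial and remainder pieces are of the same order or smaller because $|\bs u|/n\le\varepsilon$, except that when $\nu\in\Bbb Z$ the term $c\,|\bs z|^{2m}\log|\bs z|$ in the expansion has $2m$th difference of size $O(n^{-2m}\log(n/|\bs u|))$, which is exactly the logarithmic factor above.

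Granting the covariance estimate, the variance bound is a routine lattice computation. Inserting it into the identity above and using $\#\Omega_n\asymp n^d$,
\[
\mathrm{var}\,Q_n^m\;\lesssim\;\frac{1}{n^{d}}\sum_{\substack{\bs v\in\Bbb Z^{d}\\ |\bs v|\lesssim n}}(1+|\bs v|)^{4(\nu-m)}\,\big(1+\log(n/(1+|\bs v|))\big)^{2},
\]
since for each fixed $\bs i$ the map $\bs j\mapsto\bs v=n(\bs i-\bs j)$ is injective into a subset of $\Bbb Z^d$ of diameter $O(n)$. Because $(\log(n/(1+|\bs v|)))^{2}$ has an integrable singularity at $\bs v=\bs 0$, the squared logarithm changes the last sum only by a bounded multiple, so the sum is $O(1)$ when $4(m-\nu)>d$, $O(\log n)$ when $4(m-\nu)=d$, and $O(n^{d+4(\nu-m)})$ when $4(m-\nu)<d$ --- precisely the three cases of the statement.

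For strong consistency I would upgrade the $L^2$ bound via a higher-moment estimate, which avoids the subsequence-and-gap-filling nuisance stemming from the distinct grids $\Omega_n$. The centered variable $Q_n^m-\mathsf E Q_n^m$ lies in the second Wiener chaos of $Y$, so hypercontractivity (equivalently, a direct Wick expansion) yields $\mathsf E(Q_n^m-\mathsf E Q_n^m)^{2p}\le c_p\,(\mathrm{var}\,Q_n^m)^{p}$ for every integer $p\ge1$. The variance bound gives $\mathrm{var}\,Q_n^m\le c'\,n^{-\gamma}\log n$ with $\gamma=\min\{4(m-\nu),d\}>0$; taking $p$ with $p\gamma>1$ makes $\sum_n\mathsf E(Q_n^m-\mathsf E Q_n^m)^{2p}$ finite, so Markov's inequality and the Borel--Cantelli lemma give $Q_n^m-\mathsf E Q_n^m\to0$ w.p.1. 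Since $\mathsf E Q_n^m=n^{2\nu}\,\mathsf E(\Delta_{\bs h/n}^m Y(\bs 0))^2\to a_\nu^m$ by (\ref{eept}) --- again needing only $m>\nu$ for the leading term --- the conclusion $Q_n^m\to a_\nu^m$ w.p.1 follows. The main obstacle throughout is the uniform covariance decay estimate, and within it the bookkeeping for the intermediate regime, where the difference segment approaches but never reaches the singularity of $K(|M\cdot|)$ at the origin.
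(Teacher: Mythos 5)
Your strategy is essentially the paper's: write $Q_n^m$ as a Gaussian quadratic form so that $\operatorname{var}Q_n^m$ is $2n^{4\nu}(\#\Omega_n)^{-2}\|\Sigma\|_F^2$ with $\Sigma(\bs i,\bs j)=\operatorname{cov}(\Delta_{\bs h/n}^mY(\bs i),\Delta_{\bs h/n}^mY(\bs j))$, prove the pointwise decay $|\Sigma(\bs i,\bs j)|\lesssim n^{-2\nu}|n(\bs i-\bs j)|^{2(\nu-m)}$ off the diagonal and $\lesssim n^{-2\nu}$ near it, sum over the lattice, and finish with Borel--Cantelli (the paper uses Lemmas \ref{boundonderiv}--\ref{boundd} plus the Hanson--Wright inequality; your hypercontractivity/higher-moment route through the second Wiener chaos is an equally valid substitute for the concentration step).

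The one concrete problem is the logarithmic factor $\bigl(1+\log(n/(1+|\bs u|))\bigr)$ you attach to the covariance envelope, together with the claim that its square ``changes the last sum only by a bounded multiple.'' That claim is false as stated: for $|\bs v|\le\sqrt n$ the factor $\bigl(\log(n/(1+|\bs v|))\bigr)^2$ is at least $\tfrac14(\log n)^2$, so in the regime $4(\nu-m)<-d$ your lattice sum is bounded below by a constant times $(\log n)^2$ rather than $O(1)$, and your argument only delivers $\operatorname{var}Q_n^m\lesssim n^{-d}(\log n)^2$ (and an extra log at the boundary case), which is weaker than the theorem. The fix is that the logarithm is never actually needed. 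On the diagonal, Lemma \ref{FirstLemma} gives exactly $\asymp n^{-2\nu}$ with no log. In the intermediate regime, the contribution $n^{-2m}\log(n/|\bs u|)$ that you extract from the $t^{2m}\log t$ term of the Bessel expansion is dominated by the principal contribution $n^{-2\nu}|\bs u|^{2(\nu-m)}=n^{-2m}(n/|\bs u|)^{2(m-\nu)}$ precisely because $m>\nu$ strictly; more generally, every $|\log|\bs s-\bs t||$ produced by differentiating a $t^{k}\log t$ term is absorbed by the strictly negative power $|\bs s-\bs t|^{2\nu-2m}$ on a bounded domain. This is what the paper's Lemma \ref{ksqrt} and Lemma \ref{boundonderiv} establish, yielding the log-free bound $|\partial_{\bs h}^{(m,m)}K(|M\bs s-M\bs t|)|\lesssim|\bs s-\bs t|^{2\nu-2m}$; with that envelope your lattice computation gives the three stated rates exactly. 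One further small point: in the regime $C_0\le|\bs u|\le\varepsilon n$ you should say explicitly how the full Bessel-series remainder (not just its leading order at the origin) is differenced $2m$ times with uniform control; the paper does this via the explicit decomposition of $t^{\nu/2}\mathcal K_\nu(\sqrt t)$ into entire functions plus the singular pieces, whereas the paper's alternative device of writing the increment as an integral of the $2m$-th directional derivative over a cube (Lemma \ref{bounds1} together with Lemma \ref{suprem}) avoids the regime split altogether.
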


The above theorem establishes that $Q_n^m$ consistently estimates $a_\nu^m$ (which depends on $\bs h$). Now we show how these estimates can be used to recover $M$ and $\sigma^2 \alpha^{2\nu}$.
As was mentioned above, we suppose $M$ is upper triangular with determinant one and positive diagonal elements. After renormalizing by known constants, the values of $a_\nu^m$ allow us to consistently estimate $|\tilde M \bs h|^2$ where $\tilde M\triangleq \sigma^{1/\nu}\alpha M$ for finitely many  directions $\bs h$. We show by induction that these values are sufficient to recover each column of $\tilde M$. Once this is established, the requirement $\det M=1$ gives  $M=(\det \tilde M)^{-1/d}\tilde M$ and $\sigma^2\alpha^{2\nu}= (\det \tilde M)^{2\nu/d}$.

Let $\tilde M_{i,j}$ denote the $i,j^\text{th}$ element of $\tilde M$ and let $\tilde M_{:,i}$ denote the $i^\text{th}$ column of $\tilde M$. Also let $\tilde M_{1:k,1:k}$ be the submatrix with elements $\tilde M_{i,j}$ for $i,j=1,\ldots,k$.
For the first column of $\tilde M$, notice that $|\tilde Me_1|=\tilde M_{1,1}$ where  $e_1,\ldots,e_d$ denote the standard basis of $\Bbb R^d$. This follows  since $\tilde M$ is upper triangular with positive diagonal.  For the inductive step suppose the first $k$ columns  $\tilde M_{:,1}\ldots,\tilde M_{:,k}$  are known. Taking $\bs h= e_{k+1}$  and $\bs h= e_{k+1}-e_{i}$   allows us to recover $ |\tilde M_{:,k+1}|^2$ and  $ |\tilde M_{:,k+1}-\tilde M_{:,i}|^2$ for $i=1,\ldots, k$. By adding and subtracting appropriate terms we can then recover:
$\bigl\langle \tilde M_{:,k+1} ,  \tilde  M_{:,i}  \bigr\rangle$, for all $i=1,\ldots,k+1$.
Therefore $\tilde  M_{:,k+1}= \Bigl(v, \sqrt{| \tilde M_{:,k+1}|^2-|v|^2 } ,0,\ldots,0\Bigr)^T$ where $v\triangleq \tilde M_{1:k,1:k}^{-1} \bigl(\bigl\langle  \tilde M_{:,k+1} ,   \tilde M_{:,i}  \bigr\rangle\bigr)_{i=1}^{k}$.
This establishes the inductive step and therefore $ \tilde M$ can be identified from observing $| \tilde M\bs h|^2$ at $d(d+1)/2$ different vectors $\bs h$ (let them be denoted by $\bs h_1,\ldots,\bs h_{d(d+1)/2}$).

Notice that as $\tilde M$ ranges over the set of upper triangular matrices with positive diagonal, the transformation $\{|\tilde M\bs h|\colon \bs h=\bs h_1\ldots,\bs h_{d(d+1)/2}\} \overset{f_1}\rightarrow \tilde M \overset{f_2}\rightarrow (M,\sigma^2\alpha^{2\nu})$ sends an open subset of $\Bbb R^{d(d+1)/2}$ to $ SL(d,\Bbb R)\times \Bbb R^+$. Since $f_2\circ f_1$ is a continuous map,
\[ (\widehat{\sigma^2\alpha^{2\nu}},\widehat{M})\rightarrow (\sigma^2\alpha^{2\nu},M), \quad w.p.1 \]
as $n\rightarrow \infty$. 

\subsection{Estimating $\alpha$, when $d>4$}

In this section we construct an estimate of $\sigma^2 \alpha^{2\nu+2} |M\bs h|^{2\nu+2}$ when $d>4$, which, in combination with  $M$ and $\sigma^2 \alpha^{2\nu}$, allows us to consistently estimate $\alpha$.  We start by noticing that by  Lemma \ref{FirstLemma}, for any $p,q>\nu+1$
\[ \mathsf E n^2 \left[Q^p_n -\frac{a^p_\nu}{a^q_\nu}Q^q_n\right]\rightarrow \left[b^p_\nu-  \frac{a^p_\nu}{a^q_\nu} b^q_\nu\right] \]
as $n\rightarrow \infty$. 
The term $b^p_\nu-  \frac{a^p_\nu}{a^q_\nu} b^q_\nu$ is significant because, for any positive integer $p,q$
  \[b^p_\nu-  \frac{a^p_\nu}{a^q_\nu} b^q_\nu= c\,\sigma^2 \alpha^{2\nu+2} |M\bs h|^{2\nu+2} \]
   where $0\leq c\leq \infty$ is a known constant depending on $p$ and $q$. In addition, Lemma \ref{Pozz} in the Appendix establishes that  $c\neq 0$ and $c\neq \infty$ for at least one  $p,q>\nu+1$.
Moreover, $a^p_\nu/a^q_\nu$ doesn't depend on the unknown parameters $\sigma^2$, $\alpha$ and $M$ and therefore one can construct $n^2 \left[Q^p_n -\frac{a^p_\nu}{a^q_\nu}Q^q_n\right]$ from the observed values of the random field $Y$. The following theorem quantifies how large $p$ and $q$ need to be  for the almost sure convergence of $n^2 \left[Q^p_n -\frac{a^p_\nu}{a^q_\nu}Q^q_n\right]$ to $b^p_\nu-  \frac{a^p_\nu}{a^q_\nu} b^q_\nu$.

\begin{theorem}  
\label{secondThm}
Let $Y$ be a mean zero, geometric anisotropic $d$-dimensional Mat\'ern Gaussian random field with parameters $(\sigma, \alpha, \nu, M)$ and let $\Omega$ be a bounded, open subset of $\Bbb R^d$. Suppose $p\neq q$ are positive integers such that $p,q >\nu+1$ and both are large enough so that $4< \min\{2p-2\nu, d\}$ and  $4< \min\{2q-2\nu,d\}$.
Then
\[ n^2\left[Q^p_n -\frac{a^p_\nu}{a^q_\nu}Q^q_n\right]\rightarrow \left[b^p_\nu-  \frac{a^p_\nu}{a^q_\nu} b^q_\nu\right]  , \quad w.p.1\]
as $n\rightarrow \infty$.
\end{theorem}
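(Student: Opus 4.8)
The plan is to run the same two-moment argument that underlies Theorem~\ref{FristThem}. Write $c:=a^p_\nu/a^q_\nu$ (well defined for $q>\nu$), $R_n:=n^2\bigl[Q^p_n-cQ^q_n\bigr]$, and $L:=b^p_\nu-cb^q_\nu$; the goal is $R_n\to L$ with probability one. First I would dispose of the mean: by Lemma~\ref{FirstLemma} the quantity $\mathsf E(\Delta^m_{\bs h/n}Y(\bs t))^2$ does not depend on $\bs t$ and equals $a^m_\nu n^{-2\nu}+b^m_\nu n^{-2\nu-2}+o(n^{-2\nu-2})$, so that $\mathsf E Q^m_n=a^m_\nu+b^m_\nu n^{-2}+o(n^{-2})$ and hence
\[
\mathsf E R_n=n^2\bigl(a^p_\nu-ca^q_\nu\bigr)+\bigl(b^p_\nu-cb^q_\nu\bigr)+o(1)=L+o(1),
\]
the $n^2$ term vanishing by the definition of $c$. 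It then suffices to show that $\text{var}\,R_n$ decays fast enough to run Borel--Cantelli.

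For the variance, set $X_{\bs j}:=n^\nu\Delta^p_{\bs h/n}Y(\bs j)$ and $W_{\bs j}:=n^\nu\Delta^q_{\bs h/n}Y(\bs j)$, a jointly centred Gaussian family, so that $R_n=(\#\Omega_n)^{-1}n^2\sum_{\bs j\in\Omega_n}\bigl(X_{\bs j}^2-cW_{\bs j}^2\bigr)$. Since $\text{cov}(U^2,V^2)=2\,\text{cov}(U,V)^2$ for centred jointly Gaussian $U,V$, expanding the double sum gives, with $C$ depending on $c$,
\[
\text{var}\,R_n\ \le\ \frac{C\,n^{4+4\nu}}{(\#\Omega_n)^2}\ \sum_{\bs j,\bs k\in\Omega_n}\ \max_{m,m'\in\{p,q\}}\Bigl|\text{cov}\bigl(\Delta^{m}_{\bs h/n}Y(\bs j),\,\Delta^{m'}_{\bs h/n}Y(\bs k)\bigr)\Bigr|^{2}.
\]
The crux is the pointwise estimate, for every sufficiently small $\varepsilon>0$ and uniformly over $\bs j,\bs k\in\Omega_n$,
\[
\Bigl|\text{cov}\bigl(\Delta^{m}_{\bs h/n}Y(\bs j),\,\Delta^{m'}_{\bs h/n}Y(\bs k)\bigr)\Bigr|\ \le\ C_\varepsilon\,n^{-2\nu}\bigl(1+n|\bs j-\bs k|\bigr)^{2\nu-m-m'+\varepsilon}.
\]
This I would prove by writing the left side as a mixed finite difference, of total order $m+m'$ and step $\bs h/n$, of the stationary covariance $\bs u\mapsto\text{cov}(Y(\bs u),Y(\bs 0))$ evaluated at the lag $\bs j-\bs k$, and then substituting the expansion~(\ref{PrinceTerms2}): the even polynomial part is annihilated through degree $m+m'-1$ and its surviving terms contribute $O(n^{-m-m'})$, which is at most the right-hand side above; the homogeneous terms $\sigma^2G_\nu$ and $-\nu\sigma^2G_{\nu+1}$ are controlled by their homogeneity when the lag has size $O(1/n)$ and by smoothness away from the origin for larger lags (a logarithm, absorbed into the $\varepsilon$, appears only when $\nu$ or $\nu+1$ is an integer); and the $o(G_{\nu+1})$ remainder is dominated by the same quantity. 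For $\bs j=\bs k$ one uses Cauchy--Schwarz and Lemma~\ref{FirstLemma}. Because $m+m'>2\nu$ for all $m,m'\in\{p,q\}$, the exponent above is negative for $\varepsilon$ small, and summing over the lattice $\Omega\cap(\mathbb Z^d/n)$ (via $\sum_{\bs l\in\mathbb Z^d,\,0<|\bs l|\le N}|\bs l|^{-2\beta}\le C(1+N^{d-2\beta})\log N$) together with $\#\Omega_n\asymp n^d$ yields, after minimising the exponent over $m,m'\in\{p,q\}$,
\[
\text{var}\,R_n\ \le\ C_\varepsilon\Bigl(n^{4-d+\varepsilon}+n^{4-4(\min\{p,q\}-\nu)+\varepsilon}\Bigr)
\]
for every sufficiently small $\varepsilon>0$. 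The hypotheses $d>4$ and $\min\{p,q\}-\nu>2$ (the latter being precisely $4<\min\{2p-2\nu,2q-2\nu\}$) force both exponents to be strictly negative, so $\text{var}\,R_n\to0$.

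For the almost-sure statement, when $d\ge6$ one picks $\varepsilon$ small enough that both exponents fall below $-1$, so $\sum_n\text{var}\,R_n<\infty$; Chebyshev's inequality and Borel--Cantelli then give $R_n-\mathsf E R_n\to0$ a.s., which together with $\mathsf E R_n\to L$ proves the claim. When $d=5$ one restricts first to a polynomially growing subsequence, say $n_k=k^2$, along which $\sum_k\text{var}\,R_{n_k}<\infty$ and hence $R_{n_k}\to L$ a.s., and then bounds $\max_{n_k\le n<n_{k+1}}|R_n-R_{n_k}|$ using the same second-moment estimates to fill in the gaps, exactly as in the proof of Theorem~\ref{FristThem}. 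I expect the pointwise covariance estimate to be the main obstacle: one must extract the decay exponent $2\nu-m-m'<0$ uniformly across the whole range of grid separations, splice the ``singular'' regime $|\bs j-\bs k|=O(1/n)$ smoothly to the ``smooth'' regime $|\bs j-\bs k|\asymp1$, and absorb the logarithmic factors occurring at integer $\nu$. Everything else is bookkeeping, and the dimensional and smoothness hypotheses enter precisely so that the extra $n^4$ produced by the $n^2$-scaling does not destroy summability.
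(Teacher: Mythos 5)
Your computation of the mean and your reduction to a variance bound are sound, and your covariance estimate $\bigl|\text{cov}\bigl(\Delta^{m}_{\bs h/n}Y(\bs j),\Delta^{m'}_{\bs h/n}Y(\bs k)\bigr)\bigr|\lesssim n^{-2\nu}(1+n|\bs j-\bs k|)^{2\nu-m-m'+\varepsilon}$ is the right object (it is the mixed-order analogue of Lemma \ref{bounds1}, and for $m=m'$ it coincides with the bound $n^{-2m}|\bs i-\bs j|^{2\nu-2m}$ proved there). But your concentration step diverges from the paper's in a way that leaves a genuine hole. The paper never relies on Chebyshev plus summable variances: it splits $n^2\bigl[Q^p_n-\tfrac{a^p_\nu}{a^q_\nu}Q^q_n\bigr]$ by a union bound into the two events $\{|Q^p_n-\mathsf EQ^p_n|\geq \epsilon/2n^2\}$ and $\{|a^p_\nu||Q^q_n-\mathsf EQ^q_n|\geq |a^q_\nu|\epsilon/2n^2\}$ and applies the Hanson--Wright inequality to each quadratic form, giving tails of the form $2\exp\bigl(-c\,n^{-4}\epsilon^2/\|\tilde\Sigma\|_2\bigr)$; since $\|\tilde\Sigma\|_2\lesssim n^{-d}+n^{2(\nu-m)}$ and $4<\min\{2m-2\nu,d\}$, the exponent grows like a positive power of $n$ and Borel--Cantelli applies in every admissible dimension at once. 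Your route gives only $\text{var}\,R_n\lesssim n^{4-d+\varepsilon}+n^{4-4(\min\{p,q\}-\nu)+\varepsilon}$; the second exponent is below $-4$ under the hypothesis $2p-2\nu>4$, but the first is $-1+\varepsilon$ when $d=5$, which is not summable, so Chebyshev--Borel--Cantelli fails exactly at the smallest dimension the theorem covers.

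Your proposed repair for $d=5$ does not close this. You cannot cite the proof of Theorem \ref{FristThem} for the gap-filling step, because that proof contains no subsequence argument (it uses Hanson--Wright throughout). And the naive version of the fix does not work: with $n_k=k^a$ the block $\{n_k\leq n<n_{k+1}\}$ contains $\asymp k^{a-1}$ indices, and bounding $\mathsf P\bigl(\max_{n_k\leq n<n_{k+1}}|R_n-R_{n_k}|>\epsilon\bigr)$ by a union bound with $\text{var}(R_n-R_{n_k})\lesssim k^{-a(1-\varepsilon)}$ yields block contributions of order $k^{-1+a\varepsilon}$, which are not summable for any $a$; there is no cheap cancellation between $R_n$ and $R_{n_k}$ because the two quantities are built from increments on different grids with different step sizes. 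The gap is fixable within your moment framework --- for a centred Gaussian chaos of order two, hypercontractivity gives $\mathsf E|R_n-\mathsf ER_n|^{2r}\lesssim(\text{var}\,R_n)^{r}$, so a fourth-moment Chebyshev bound yields tails $\lesssim n^{2(4-d)+2\varepsilon}=n^{-2+2\varepsilon}$ at $d=5$, which is summable --- but as written the $d=5$ case is not proved, and the cleanest repair is simply to replace Chebyshev by the Hanson--Wright bound as the paper does.
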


Theorems \ref{FristThem} and \ref{secondThm} show that there exists strongly consistent estimates of ${\sigma^2 \alpha^{2\nu}}$, ${M}$ and ${\sigma^2 \alpha^{2\nu+2} |M\bs h|^{2\nu+2}}$.  This, in turn, gives consistent estimates of $\alpha$, $\sigma$ and $M$. Notice that when $d\leq 3$ this is impossible due to the mutual absolute continuity of Mat\'ern Gaussian random fields with different scale and variance parameters (see Zhang \cite{zhang:2004}). 
Since Gaussian measures are either mutually absolutely continuous or orthogonal, the fact that we have strongly consistent estimates of $\alpha$, $\sigma$ and $M$ gives the following corollary.

\begin{corollary} Let $Y_1$ and $Y_2$ be two, mean zero, geometric anisotropic $d$-dimensional Mat\'ern Gaussian random fields defined a bounded open set $\Omega\subset \Bbb R^d$ with parameters $(\sigma_1,\alpha_1,\nu,M_1)$ and $(\sigma_2,\alpha_2,\nu,M_2)$ where $d>4$. If $(\sigma_1,\alpha_1)\neq (\sigma_2,\alpha_2)$ or $M_1\neq_{SL/SO}M_2$ then the Gaussian measures induced by the random fields ${Y_1}$ and $ {Y_2}$ are orthogonal. \end{corollary}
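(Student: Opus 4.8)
The plan is to deduce the corollary from the strongly consistent estimators constructed in Theorems \ref{FristThem} and \ref{secondThm}, together with the dichotomy invoked above that any two Gaussian measures are either mutually absolutely continuous or orthogonal. The underlying principle is standard: if under two probability measures some statistic converges almost surely to \emph{different} constants, the measures cannot be mutually absolutely continuous, hence must be orthogonal.

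First I would pin down the measure-theoretic setting. Since $\Omega$ is open, fix a bounded open set $\Omega'$ with $\overline{\Omega'}\subset\Omega$ and put $D\triangleq\Omega'\cap\bigcup_{n\geq 1}(\mathbb Z^d/n)$, a countable subset of $\Omega$. For $i=1,2$ let $\mathsf P_i$ be the law of $(Y_i(\bs t))_{\bs t\in D}$; this is a centered Gaussian measure on $\mathbb R^{D}$, so the Gaussian dichotomy applies to $\mathsf P_1,\mathsf P_2$, and since a set separating $\mathsf P_1$ from $\mathsf P_2$ on $\mathbb R^{D}$ pulls back to a set separating the full laws of $Y_1$ and $Y_2$, it suffices to prove $\mathsf P_1\perp\mathsf P_2$. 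Because $\overline{\Omega'}$ is compact inside the open set $\Omega$, for each of the finitely many increment directions $\bs h$ used below and each $\bs j\in D$ the stencil $\{\bs j+(k/n)\bs h:0\leq k\leq m\}$ lies in $\Omega$ for all large $n$; hence the statistics $Q_n^m$ and $n^2[Q^p_n-(a^p_\nu/a^q_\nu)Q^q_n]$ formed over $\Omega'\cap(\mathbb Z^d/n)$ are honest measurable functionals of the coordinate process on $\mathbb R^{D}$.

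Next I would assemble a single estimator of the full parameter vector. Applying Theorem \ref{FristThem} at the $d(d+1)/2$ directions $\bs h_1,\ldots,\bs h_{d(d+1)/2}$ and passing the limits through the continuous reconstruction map of Section \ref{AnyDim} yields $\mathsf P_i$-a.s.\ consistent estimators $\widehat{\sigma^2\alpha^{2\nu}}_n$ and $\widehat M_n$, with $M$ taken in its upper-triangular, positive-diagonal, determinant-one form. Since $d>4$ one may choose $p\neq q$ with $4<\min\{2p-2\nu,d\}$ and $4<\min\{2q-2\nu,d\}$, and by Lemma \ref{Pozz} these can be picked so that the constant $c$ in $b^p_\nu-(a^p_\nu/a^q_\nu)b^q_\nu=c\,\sigma^2\alpha^{2\nu+2}|M\bs h|^{2\nu+2}$ is finite and nonzero; Theorem \ref{secondThm} then supplies a $\mathsf P_i$-a.s.\ limit from which, after dividing out the estimated $\sigma^2\alpha^{2\nu}$ and $|M\bs h|^{2\nu+2}$ and the known constants $a^p_\nu/a^q_\nu$ and $c$, one extracts $\widehat{\alpha^2}_n$ and then $\widehat{\sigma^2}_n$. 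Setting $\widehat\theta_n\triangleq(\widehat{\sigma^2}_n,\widehat{\alpha^2}_n,\widehat M_n)$, the two theorems and continuity of the reconstruction maps give $\widehat\theta_n\to\theta_i$ $\mathsf P_i$-almost surely, where $\theta_i\triangleq(\sigma_i^2,\alpha_i^2,M_i)$ is a deterministic point.

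Finally I would close by contradiction. If $\mathsf P_1$ and $\mathsf P_2$ were not orthogonal, the dichotomy would make them mutually absolutely continuous, so the $\mathsf P_1$-a.s.\ event $\{\widehat\theta_n\to\theta_1\}$ would also be $\mathsf P_2$-a.s. Since $\{\widehat\theta_n\to\theta_2\}$ is $\mathsf P_2$-a.s.\ as well and a sequence has at most one limit, this would force $\theta_1=\theta_2$. But $(\sigma_1,\alpha_1)\neq(\sigma_2,\alpha_2)$ or $M_1\neq_{SL/SO}M_2$ makes $\theta_1\neq\theta_2$ — the $M$-coordinate distinguishes the cosets exactly because we use the canonical representative — a contradiction, so $\mathsf P_1\perp\mathsf P_2$. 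The argument is essentially bookkeeping once Theorems \ref{FristThem} and \ref{secondThm} are in hand; the one step that genuinely needs care, and the only place it could go wrong, is the reduction to a countable index set sitting compactly inside $\Omega$, which is what makes the increment-based statistics legitimate measurable functionals on a space carrying the Gaussian dichotomy.
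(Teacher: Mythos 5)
Your proposal is correct and follows the same route the paper takes: the paper's entire argument for the corollary is the one-line observation that the Gaussian dichotomy together with the strongly consistent estimators of $\sigma$, $\alpha$ and $M$ from Theorems \ref{FristThem} and \ref{secondThm} forces orthogonality whenever the parameters differ. You have simply written out the bookkeeping (countable index set, assembled estimator, contradiction via uniqueness of a.s.\ limits) that the paper leaves implicit, and that bookkeeping is sound.
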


 {\it Remark:} The strong consistency results for our estimates of $\sigma^2 \alpha^{2\nu}$, $\alpha$ and $M$ all depend on knowledge of the true value of $\nu$. However, our results can be extended when using an estimate $\hat\nu$ so long as the error $\epsilon_n\triangleq\hat \nu-\nu$ satisfies  $\epsilon_n\log n\rightarrow 0$ with probability one as $n\rightarrow \infty$. This follows since the ratio of the quadratic variation, $Q^m_n$, using the true $\nu$, to the quadratic variation using the estimated $\hat \nu$, is $n^{-\epsilon_n}$ which converges to $1$ if  $\epsilon_n\log n\rightarrow 0$.

\section{Beyond the Mat\'ern}
\label{BeyondB}
The previous section dealt exclusively with the Mat\'ern autocovariance. Now we show how these results can be extended to other autocovariance functions. We choose  two examples to illustrate how the methodology can be easily extended beyond the Mat\'ern autocovariance function. 
The key components for showing extensions are establishing versions of Lemmas  \ref{FirstLemma} and  \ref{boundonderiv}. Lemma \ref{FirstLemma} quantifies the expected value of the squared increments $(\Delta_{\bs h/n}^{p} Y(\bs t))^2$ in terms of $n$. Lemma \ref{boundonderiv} establishes that, in effect, derivatives of the covariance  away from the origin are dominated by the derivatives of the principle irregular term.
Once the analogs of these Lemmas are established  all the subsequent arguments for versions of Theorems \ref{FristThem} and \ref{secondThm} follow almost immediately. 

For our first example we consider the case when $Y$ is a mean zero Gaussian random field
on $\Bbb R^d$ with  generalized autocovariance function $c_1|t|^{\delta_1}+c_2 |t|^{\delta_2}$ where $\delta_1$ and $\delta_2$ are known but $c_1$ and $c_2$ are unknown  (it is tacitly assumed that the values of $c_1$ and $c_2$ give a conditionally positive definite function of order $\lfloor \delta_2/2 \rfloor$ in $\Bbb R^d$, see \cite{chiles:book}). In what follows we suppose $\delta_2>\delta_1>0$ and neither are even integers. The appropriate version of Lemma \ref{FirstLemma} says that when $p> \delta_2/2$  
 \begin{equation}
 \label{beyond1}
\mathsf E(\Delta_{\bs h/n}^{p} Y(\bs t))^2 = \frac{c_1 C_{p,\delta_1} }{n^{\delta_1}}+  \frac{c_2 C_{p,\delta_2}}{n^{\delta_2}}
\end{equation}
 where 
$C_{p,\delta} \triangleq  |\bs h|^{\delta}\sum_{i,j=0}^p (-1)^{i+j}{p\choose i}{p\choose j} |i-j|^{\delta} $.  Now $Q_n^p$ is defined as in (\ref{DefofQQ}) with $\delta_1$ in place of $2\nu$.
In this case, $\mathsf E Q_n^p = c_1 C_{p,\delta_1}+ c_2 C_{p,\delta_2} n^{\delta_1-\delta_2}$ and therefore we set $\hat c_1\triangleq Q_n^p/C_{p,\delta_1} $. Also,  for an integer $q>p$ we have $\mathsf E n^{\delta_2-\delta_1}\bigl[  Q_n^p - \frac{C_{p,\delta_1}}{C_{q,\delta_1}} Q_n^q  \bigr]= c_2\bigl[  C_{p,\delta_1} - \frac{C_{p,\delta_1}}{C_{q,\delta_1}} C_{q,\delta_2}  \bigr]$ and after a renormalization one gets the estimate $\hat c_2$.
The analog to Lemma  \ref{boundonderiv}  says that when $p> \delta_2/2$ and $\Omega$ is a bounded  open subset of $\Bbb R^d$ there exists a constant $c>0$ such that 
\begin{equation}
 \label{beyond2}
 \bigl|\partial_{\bs h}^{(p,p)} \text{cov}(Y(\bs s),Y(\bs t)) \bigr|  \leq c |\bs s-\bs t|^{\delta_1-2p}
\end{equation}
for all $\bs s,\bs t\in \Omega$ such that $\bs s\neq \bs t$.  
Once (\ref{beyond1}) and (\ref{beyond2}) are established,  versions of Lemma \ref{bounds1}, Lemma \ref{lalalemma}, Lemma \ref{boundd} and Theorem \ref{FristThem}  following by replacing $2\nu$ with $\delta_1$. To establish  Theorem  \ref{secondThm}, replace the $n^2$ term with $n^{\delta_2-\delta_1}$ in equation (\ref{eeeps}) and continue in an similar manner to establish the following theorem. 
\begin{theorem}
Suppose $Y$ is a mean zero Gaussian random field on $\Bbb R^d$ with generalized autocovariance function $c_1|t|^{\delta_1}+c_2|t|^{\delta_2}$ observed on $\Omega\cap \{ \Bbb Z^d/n \}$ where $\Omega$ is a bounded open subset of $\Bbb R^d$ and $0<\delta_1<\delta_2$ are known and not even integers. 
If $0<2(\delta_2 - \delta_1 )< d$ then there exists integers $q>p>0$ such that $\hat c_1$ and $\hat c_2$ (defined above) converge with probability one to $c_1$ and $c_2$ (respectively) as $n\rightarrow\infty$.
%
\end{theorem}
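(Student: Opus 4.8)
The section already supplies the skeleton, so the plan is to verify the two inputs (\ref{beyond1}) and (\ref{beyond2}) and then copy, almost verbatim, the variance analysis of Theorems \ref{FristThem} and \ref{secondThm}, with $2\nu$ replaced everywhere by $\delta_1$ and the $n^2$ rescaling of Theorem \ref{secondThm} replaced by $n^{\delta_2-\delta_1}$. For (\ref{beyond1}), fix an integer $p>\delta_2/2$; then the coefficients $(-1)^i\binom{p}{i}$ of $\Delta_{\bs h/n}^p$ annihilate every polynomial of degree $\le p-1\ge\lfloor\delta_2/2\rfloor$, so they form an admissible linear functional for a covariance that is conditionally positive definite of order $\lfloor\delta_2/2\rfloor$, and the usual reconstruction of the second moment gives $\mathsf E(\Delta_{\bs h/n}^p Y(\bs t))^2=\sum_{i,j=0}^p(-1)^{i+j}\binom{p}{i}\binom{p}{j}\bigl(c_1|(i-j)\bs h/n|^{\delta_1}+c_2|(i-j)\bs h/n|^{\delta_2}\bigr)$; pulling the powers of $n$ out of the Euclidean norms yields (\ref{beyond1}) as an \emph{exact} identity --- unlike the Mat\'ern case there is no $o(\cdot)$ remainder, since the covariance here is literally the two-term function. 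Hence $\mathsf E Q_n^p=c_1C_{p,\delta_1}+c_2C_{p,\delta_2}n^{\delta_1-\delta_2}$ and $\mathsf E\,n^{\delta_2-\delta_1}\bigl[Q_n^p-\tfrac{C_{p,\delta_1}}{C_{q,\delta_1}}Q_n^q\bigr]=c_2\bigl[C_{p,\delta_2}-\tfrac{C_{p,\delta_1}}{C_{q,\delta_1}}C_{q,\delta_2}\bigr]$ exactly, so $\hat c_1,\hat c_2$ are unbiased after the indicated renormalizations and everything reduces to variance control. For (\ref{beyond2}), off the diagonal the covariance is smooth, and applying $\partial_{\bs h}^{(p,p)}$ lowers the degree of homogeneity of $|\bs s-\bs t|^\delta$ by $2p$, so $\partial_{\bs h}^{(p,p)}|\bs s-\bs t|^\delta=O(|\bs s-\bs t|^{\delta-2p})$; since $2p>\delta_2>\delta_1$ both exponents are negative and on the bounded set $\Omega$ one has $|\bs s-\bs t|^{\delta_2-2p}\le(\operatorname{diam}\Omega)^{\delta_2-\delta_1}|\bs s-\bs t|^{\delta_1-2p}$, so the $\delta_1$-term dominates and (\ref{beyond2}) follows.

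With these two facts the arguments of Section \ref{Geoo} transfer directly. For the almost sure convergence of $Q_n^p$ (the analogue of Theorem \ref{FristThem}), Gaussianity gives $\operatorname{cov}\bigl((\Delta_{\bs h/n}^p Y(\bs j))^2,(\Delta_{\bs h/n}^p Y(\bs k))^2\bigr)=2\operatorname{cov}(\Delta_{\bs h/n}^p Y(\bs j),\Delta_{\bs h/n}^p Y(\bs k))^2$; for $\bs j\neq\bs k$ the cross-covariance is a $(p,p)$-fold finite difference of the covariance with step of size $1/n$, hence is $O(n^{-2p}|\bs j-\bs k|^{\delta_1-2p})$ by (\ref{beyond2}) once $|\bs j-\bs k|$ exceeds a fixed multiple of $1/n$, the finitely many near-diagonal terms per point being bounded by Cauchy--Schwarz and (\ref{beyond1}) and contributing an $O(n^{-d})$ floor. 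Substituting $\bs j-\bs k=\bs l/n$ the powers of $n$ cancel in the generic summand, and with $\#\Omega_n\asymp n^d$ one finds that $\operatorname{var}Q_n^p$ is controlled by $n^{-d}\sum_{0<|\bs l|\lesssim n}|\bs l|^{2\delta_1-4p}$, reproducing the trichotomy of Theorem \ref{FristThem} under $\nu\leftrightarrow\delta_1/2$: it is $O(n^{4(\delta_1/2-p)})$, $O(n^{-d}\log n)$, or $O(n^{-d})$ according to the sign of $4(\delta_1/2-p)+d$. Since $p>\delta_2/2>\delta_1/2$ this decays polynomially, so a Borel--Cantelli argument along a polynomial subsequence combined with the increment and monotonicity estimates of Lemmas \ref{bounds1}, \ref{lalalemma} and \ref{boundd} (with $2\nu$ replaced by $\delta_1$) gives $Q_n^p\to c_1C_{p,\delta_1}$ w.p.1, i.e. $\hat c_1\to c_1$ w.p.1.

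For $\hat c_2$ (the analogue of Theorem \ref{secondThm}), set $R_n\triangleq n^{\delta_2-\delta_1}\bigl[Q_n^p-\kappa Q_n^q\bigr]$ with $\kappa\triangleq C_{p,\delta_1}/C_{q,\delta_1}$ and $q\neq p$ a second integer exceeding $\delta_2/2$. Then $\operatorname{var}R_n\le 2n^{2(\delta_2-\delta_1)}\bigl(\operatorname{var}Q_n^p+\kappa^2\operatorname{var}Q_n^q\bigr)$, and multiplying the trichotomy by $n^{2(\delta_2-\delta_1)}$ gives, in the first regime, $O(n^{2\delta_2-4\min(p,q)})\to 0$ because $\min(p,q)>\delta_2/2$, and otherwise $O(n^{2(\delta_2-\delta_1)-d}\log n)\to 0$ --- this last step is the only place the hypothesis $2(\delta_2-\delta_1)<d$ is used. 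Re-running the Borel--Cantelli argument of Theorem \ref{secondThm} then shows $R_n\to c_2\bigl[C_{p,\delta_2}-\kappa C_{q,\delta_2}\bigr]$ w.p.1; and by the analogue of Lemma \ref{Pozz} (the ratio $C_{p,\delta_2}/C_{p,\delta_1}$ genuinely varies with $p$ when $\delta_1\neq\delta_2$) one may choose $q>p>\delta_2/2$ for which $C_{p,\delta_2}-\kappa C_{q,\delta_2}\neq 0$, so dividing through produces $\hat c_2\to c_2$ w.p.1.

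\textbf{Main obstacle.} The substantive work is the variance estimate for $Q_n^p$ --- establishing the sharp trichotomy, which requires the lattice bookkeeping of Lemmas \ref{bounds1}--\ref{boundd}: separating near-diagonal from distant pairs $(\bs j,\bs k)$, the rescaling $\bs j-\bs k=\bs l/n$ that cancels the powers of $n$ in the generic term, and the shell estimate for $\sum_{0<|\bs l|\lesssim n}|\bs l|^{2\delta_1-4p}$. Everything downstream is bookkeeping, and the dimension condition $2(\delta_2-\delta_1)<d$ enters at exactly one point: it is what lets the $n^{\delta_2-\delta_1}$ amplification in $R_n$ be absorbed by the $n^{-d/2}$-type decay of $(\operatorname{var}Q_n^p)^{1/2}$, exactly as $d>4$ did in Theorem \ref{secondThm}.
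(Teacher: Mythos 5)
Your setup is sound and matches the paper's: the exact two-term identity for $\mathsf E(\Delta_{\bs h/n}^{p}Y(\bs t))^2$ (correctly justified via allowable linear combinations of order $\lfloor\delta_2/2\rfloor$, with no remainder term), the derivative bound $|\partial_{\bs h}^{(p,p)}\mathrm{cov}(Y(\bs s),Y(\bs t))|\lesssim|\bs s-\bs t|^{\delta_1-2p}$ by homogeneity, the covariance-matrix norm estimates obtained from Lemmas \ref{bounds1}--\ref{boundd} with $2\nu$ replaced by $\delta_1$, and the choice of $q$ via the analog of Lemma \ref{Pozz}. But there is a genuine gap at the one step that carries the probabilistic weight: you reduce everything to ``variance control'' and conclude almost sure convergence from $\mathrm{var}\,R_n\to 0$. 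That only gives convergence in probability. If you try to upgrade it by Chebyshev plus Borel--Cantelli over the full sequence, you need $\sum_n \mathrm{var}\,R_n<\infty$, and in the regime where the dimension condition binds you have $\mathrm{var}\,R_n\asymp n^{2(\delta_2-\delta_1)-d}\log n$, whose summability requires $2(\delta_2-\delta_1)<d-1$ --- strictly stronger than the theorem's hypothesis $2(\delta_2-\delta_1)<d$. Your fallback, Borel--Cantelli along a polynomial subsequence ``combined with monotonicity estimates,'' is not available as stated: $Q_n^p$ is not monotone in $n$, the lattices $\Omega\cap\{\Bbb Z^d/n\}$ are not nested, and Lemmas \ref{bounds1}, \ref{lalalemma}, \ref{boundd} contain no oscillation or interpolation estimate that would control $Q_n^p$ between subsequence points.

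The paper closes exactly this gap with the Hanson--Wright concentration inequality: $\mathsf P(|Q_n^p-\mathsf EQ_n^p|\geq\epsilon)\leq 2\exp\bigl(-c\,\epsilon^2/\|\tilde\Sigma\|_2\bigr)$ for small $\epsilon$, where $\|\tilde\Sigma\|_2$ is the spectral-norm bound of Lemma \ref{lalalemma} (not the Frobenius/variance bound). Since $\|\tilde\Sigma\|_2$ decays polynomially, the tail $2\exp\bigl(-c\,\epsilon^2 n^{-2(\delta_2-\delta_1)}/\|\tilde\Sigma\|_2\bigr)$ is summable in $n$ for every fixed $\epsilon$ precisely when $2(\delta_2-\delta_1)<\min\{2p-\delta_1,d\}$, which is how the hypothesis $2(\delta_2-\delta_1)<d$ enters (together with taking $p$ large enough that $2p-\delta_1$ is not the binding constraint). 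Note also that the exponential bound is governed by $\|\tilde\Sigma\|_2\lesssim n^{\delta_1-2p}$ rather than by $\mathrm{var}\,Q_n^p\lesssim n^{2\delta_1-4p}$, so the resulting condition on $p$ ($2p>2\delta_2-\delta_1$) differs from the $2p>\delta_2$ your variance computation suggests; this is harmless for the existence statement but shows the two arguments are not interchangeable. To repair your proof, replace the Chebyshev/subsequence step with the Hanson--Wright bound applied to the quadratic form $W\Sigma W^T$ exactly as in the proofs of Theorems \ref{FristThem} and \ref{secondThm}.
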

There are different conditions on $p$  to guarantee convergence of $c_1$ versus  $c_2$. Generally, one only needs $p>\delta_1/2$ for consistent estimation of $c_1$, which will hold in any dimension. However, in our case, we need the additional requirement that $p>\delta_2/2$ since we are working with a conditionally positive definite function of order $\lfloor \delta_2/2\rfloor$.   To get consistent estimation of $c_2$ we need the additional inequality $2(\delta_2-\delta_1)< \min\{ 2p-\delta_1, d\}$. To relate this to our  Mat\'ern results in Section \ref{Geoo} set $\delta_1=2\nu$ and $\delta_2=2\nu+2$ so that the inequality becomes $4<\min\{ 2p-2\nu,d\}$ which appears in Theorem \ref{secondThm}.  Finally the analog to Lemma \ref{Pozz} guarantees there exits a $q>p$ such that $\bigl[  C_{p,\delta_1} - \frac{C_{p,\delta_1}}{C_{q,\delta_1}} C_{q,\delta_2}  \bigr]$ is non-zero which allows us to define $\hat c_2$.

Before we continue, we mention a comment in Wahba's book  (\cite{wahba1990}, page 44) which argues in favor of using the generalized autocovariance $|t|^{2m-1}$ over the model $|t|^{2m-1}+ c_1|t|^{2m+1}+\cdots +c_k|t|^{2m+2k-1}$ when $d=1,2,3$. The reasoning is that the two models yield mutually absolutely continuous Gaussian measures, and therefore can not be consistently distinguished.  We can  see, however, that the dimension requirement $d=1,2, 3$ is an integral component of this argument.  When the dimension gets above $4$, this reasoning no longer holds since the two models are orthogonal by the above theorem (setting $\delta_1=2m-1$ and $\delta_2=2m+1$).

For our second extension we show that the variance $\sigma^2$ and scale $\alpha$ can be separately estimated  in the exponential autocovariance model $\sigma^2 e^{- |\alpha t|^\delta}$ when the dimension  $d>2\delta$ and $\delta\neq 1$. In this case, the appropriate version of Lemma   \ref{FirstLemma}  becomes
 \begin{equation}
 \label{beyond3}
\mathsf E(\Delta_{\bs h/n}^{p} Y(\bs t))^2 = -\frac{\sigma^2 \alpha^\delta C_{p,\delta} }{n^{\delta}}+  \frac{\sigma^2\alpha^{2\delta} C_{p,2 \delta}}{2n^{2\delta}} +O(n^{-3\delta})
\end{equation}
as $n\rightarrow \infty$ when $p>\delta/2$. 
From (\ref{beyond3}) one can now easily construct estimates of $\sigma^2 \alpha^\delta$ and $\sigma^2\alpha^{2\delta}$. When a geometric anisotropy $M$ is present, the techniques of Section \ref{Geoo} are also sufficient to also construct $\widehat{M}$.
Notice that by direct differentiation, equation (\ref{beyond2}) holds when $\delta_1$ is replaced by $\delta$.  
Using similar arguments for the previous theorem and extending to a geometric anisotropy the following theorem is obtained. 

\begin{theorem}
Let $Y$ be a mean zero, Gaussian process on $\Bbb R^d$ with autocovariance function  $\sigma^2 e^{-|\alpha M \bs t|^\delta}$  observed on  $\Omega\cap \{ \Bbb Z^d/n \}$ where $\Omega$ is a bounded open subset of $\Bbb R^d$. Suppose $\delta\in(0,2)$ is known,  $\sigma$ and $\alpha $ are positive and $M$ is upper triangular with positive diagonal and determinant 1. If  $p\geq1$ then $\widehat{\sigma^2 \alpha^\delta}\rightarrow \sigma^2 \alpha^\delta$ and $\widehat{M}\rightarrow M$ with probability one as $n\rightarrow \infty$. Moreover, if $2\delta<d $ and $\delta\neq 1$ then for any $p>3\delta/2$ there exists $q>p$ such that $\hat \sigma\rightarrow \sigma$ and $\hat \alpha\rightarrow \alpha$ with probability one as $n\rightarrow \infty$.
\end{theorem}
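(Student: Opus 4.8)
The plan is to mirror the development of Section \ref{Geoo}: reduce everything to establishing the analogue of Lemma \ref{FirstLemma} (the increment-variance expansion (\ref{beyond3})) and of Lemma \ref{boundonderiv} (the bound (\ref{beyond2}) with $\delta$ in place of $\delta_1$), after which the analogues of Lemma \ref{bounds1}, Lemma \ref{lalalemma}, Lemma \ref{boundd}, Theorem \ref{FristThem} and Theorem \ref{secondThm} carry over with $2\nu$ replaced by $\delta$ and the rescaling $n^2$ in Theorem \ref{secondThm} replaced by $n^{\delta}$ (the spacing $2\delta-\delta$ between the two irregular exponents). Throughout, when the anisotropy $M$ is present one replaces $\bs h$ by $M\bs h$, exactly as in Section \ref{Geoo}.

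First I would establish (\ref{beyond3}). Expanding the exponential, $\mathsf E(\Delta^p_{\bs h/n}Y(\bs t))^2=\sigma^2\sum_{i,j=0}^p(-1)^{i+j}\binom pi\binom pj e^{-|\alpha M(i-j)\bs h/n|^{\delta}}=\sigma^2\sum_{k\ge 0}\frac{(-1)^k}{k!}\alpha^{k\delta}n^{-k\delta}C_{p,k\delta}$, where $C_{p,\gamma}\triangleq|M\bs h|^{\gamma}\sum_{i,j=0}^p(-1)^{i+j}\binom pi\binom pj|i-j|^{\gamma}$. The $k=0$ term vanishes since $\sum_i(-1)^i\binom pi=0$ for $p\ge 1$, and the $k\ge 3$ tail is $O(n^{-3\delta})$ uniformly in $n$ (and trivially in $\bs t$ by stationarity) because $\sum_k(\alpha p|M\bs h|/n)^{k\delta}/k!$ converges; retaining $k=1,2$ gives (\ref{beyond3}). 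Here $C_{p,\delta}\neq 0$ --- indeed $-C_{p,\delta}>0$ --- since $|\bs u|^{\delta}$ is conditionally negative definite of order one for $\delta\in(0,2)$, and the hypothesis $\delta\neq1$ forces $2\delta$ to be a non-even exponent, so the $k=2$ term is a genuine second irregular term rather than a polynomial killed by the increments. The analogue of (\ref{beyond2}), $|\partial^{(p,p)}_{\bs h}\text{cov}(Y(\bs s),Y(\bs t))|\le c|\bs s-\bs t|^{\delta-2p}$ on the bounded $\Omega$, then follows by direct differentiation of $\sigma^2 e^{-|\alpha M\bs u|^{\delta}}$: away from $\bs u=0$ it is smooth with bounded derivatives on $\Omega$, and near $\bs u=0$ a $2p$-th order derivative inherits the $|\bs u|^{\delta-2p}$ singularity of $|\bs u|^{\delta}$.

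Next I would deduce the first assertion. With $Q_n^p$ defined as in (\ref{DefofQQ}) but with $n^{\delta}$ in place of $n^{2\nu}$, the analogue of Theorem \ref{FristThem}, whose hypothesis $m>\nu$ becomes $p>\delta/2$ (automatic from $p\ge1$ since $\delta<2$), gives $Q_n^p\to-\sigma^2\alpha^{\delta}C_{p,\delta}$ with probability one, for each direction $\bs h$. Dividing by the known combinatorial scalar $C_{p,\delta}/|M\bs h|^{\delta}$ consistently estimates $-\sigma^2\alpha^{\delta}|M\bs h|^{\delta}=-|\tilde M\bs h|^{\delta}$, where $\tilde M\triangleq\sigma^{2/\delta}\alpha M$, hence $|\tilde M\bs h|$ for each $\bs h$. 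Running the column-by-column induction of Section \ref{AnyDim} over $d(d+1)/2$ suitable directions reconstructs $\tilde M$, and then $\widehat M\triangleq(\det\tilde M)^{-1/d}\tilde M\to M$ and $\widehat{\sigma^2\alpha^{\delta}}\triangleq(\det\tilde M)^{\delta/d}\to\sigma^2\alpha^{\delta}$ with probability one by continuity of these maps; this uses only $p\ge1$. For $\hat\alpha$ and $\hat\sigma$: from (\ref{beyond3}), $\mathsf E Q_n^p=-\sigma^2\alpha^{\delta}C_{p,\delta}+\tfrac12\sigma^2\alpha^{2\delta}C_{p,2\delta}n^{-\delta}+O(n^{-2\delta})$, so for an integer $q>p$ the leading terms cancel and $\mathsf E\,n^{\delta}\bigl[Q_n^p-\tfrac{C_{p,\delta}}{C_{q,\delta}}Q_n^q\bigr]\to\tfrac12\sigma^2\alpha^{2\delta}\bigl[C_{p,2\delta}-\tfrac{C_{p,\delta}}{C_{q,\delta}}C_{q,2\delta}\bigr]$. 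The analogue of Lemma \ref{Pozz} furnishes a $q>p$ (which can be taken above the smoothness threshold) for which the bracket is nonzero, so a renormalization of $n^{\delta}\bigl[Q_n^p-\tfrac{C_{p,\delta}}{C_{q,\delta}}Q_n^q\bigr]$ defines $\widehat{\sigma^2\alpha^{2\delta}}$; the analogue of Theorem \ref{secondThm} (with $n^2$ replaced by $n^{\delta}$), whose hypothesis $4<\min\{2p-2\nu,d\}$ here reads $2\delta<\min\{2p-\delta,d\}$, i.e.\ precisely $p>3\delta/2$ and $2\delta<d$, yields $\widehat{\sigma^2\alpha^{2\delta}}\to\sigma^2\alpha^{2\delta}$ with probability one. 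Combining, $\hat\alpha\triangleq(\widehat{\sigma^2\alpha^{2\delta}}/\widehat{\sigma^2\alpha^{\delta}})^{1/\delta}\to\alpha$ and $\hat\sigma^2\triangleq(\widehat{\sigma^2\alpha^{\delta}})^2/\widehat{\sigma^2\alpha^{2\delta}}\to\sigma^2$ with probability one, again by continuity.

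The main obstacle, beyond the essentially mechanical transcription, is twofold: controlling the remainder in (\ref{beyond3}) uniformly enough ($O(n^{-3\delta})$ in $n$, with no dependence on $\bs t$) that the $n^{\delta}$-rescaled almost sure convergence of the Theorem \ref{secondThm} analogue genuinely holds under $p>3\delta/2$ and $2\delta<d$; and the non-degeneracy input --- the Lemma \ref{Pozz} analogue --- namely that $C_{p,2\delta}-\tfrac{C_{p,\delta}}{C_{q,\delta}}C_{q,2\delta}\neq0$ for some admissible $q>p$, without which $\widehat{\sigma^2\alpha^{2\delta}}$ is not well defined. The exclusion $\delta\neq1$ is exactly what keeps the $k=2$ coefficient from being annihilated by the increments, and $2\delta<d$ is the dimension budget that, together with $p>3\delta/2$, makes the spatial decorrelation sufficient for the second-order limit.
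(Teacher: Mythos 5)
Your proposal is correct and follows essentially the same route as the paper's proof: expand the exponential to get the two-term increment-variance expansion with $O(n^{-3\delta})$ remainder, bound $\partial_{\bs h}^{(p,p)}$ of the covariance by $|\bs s-\bs t|^{\delta-2p}$, carry over Lemmas \ref{bounds1}--\ref{boundd} and the Hanson--Wright/Borel--Cantelli argument with $2\nu$ replaced by $\delta$ and the rescaling $n^{\delta}$ in place of $n^2$, invoke the Lemma \ref{Pozz} analogue for non-degeneracy of $D_{p,2\delta}-\tfrac{D_{p,\delta}}{D_{q,\delta}}D_{q,2\delta}$ (using $\delta\neq 1$), and recover $M$, $\sigma$, $\alpha$ exactly as in Section \ref{AnyDim}. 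The identification $2\delta<\min\{2p-\delta,d\}$ with the stated hypotheses $p>3\delta/2$, $2\delta<d$ also matches the paper.
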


Many other extensions are possible, including more general non-stationary random fields.
In this case, both $a_\nu^m$ and $b_\nu^m$ depend on $\bs t\in \Omega$ and $Q_n^p$  will convergence to $\int_\Omega a_\nu^m d\bs t$ and  similarly for $\int_\Omega  \left[b^p_\nu-  \frac{a^p_\nu}{a^q_\nu} b^q_\nu\right] d\bs t$. If one also needs pointwise convergence to $a_\nu^m$ or $b^p_\nu-  \frac{a^p_\nu}{a^q_\nu} b^q_\nu$  one can consider weighted local averaging of the terms in $Q_n^p$. This was the technique used in  \cite{anderesSourav} when observing a deformed isotropic Gaussian random field that locally behaved like a fractional Brownian field. However, obtaining extensions in these cases are more difficult since one needs to consider rates of decay for a bandwidth parameter. That being said, this work  leaves open the possibility of constructing consistent estimates of the  two deformations $f_1,f_2$ when observing $Y_1\circ f_1+ Y_2\circ f_2$ where $Y_1$ and $Y_2$ have generalized autocovariance functions $|t|^{\delta_1}$ and $|t|^{\delta_2}$ respectively.
Finally we mention that since $Q_n^p$ is constructed from increments, one can extend our results to random fields $Y$ with a polynomial drift of known order. 
%

%

%

\section{Simulations}
 We finish with two simulations that illustrate (and hopefully compliment) our theoretical results. The first simulation shows how one can use directional increments to estimate $\sigma^2 \alpha^{2\nu}$ and a geometric anisotropy $M$ using finitely many directions. The second simulation shows how to estimate the coefficient on the \lq second principle irregular term' ($c_2$ in equation (\ref{PrinceTerm})) and how it can be used to construct an unbiased estimate of the coefficient on the \lq first principle irregular term' ($c_1$ in equation (\ref{PrinceTerm})).

In our first example, we simulated 500 independent realizations of a Mat\'ern random field with parameters $\sigma=1.5$, $\alpha=0.8$, $\nu= 1.75$, $M(1,1)=1.2$, $M(1,2)=0.5$, $M(2,1)=0$ and $M(2,2)=1/1.2$ observed on a square grid in $[0,1]^2$ with spacing  $1/55$. On each realization we
estimated $\sigma^2 \alpha^{2\nu}$ and $M$ using $2$, $3$ and $4$ horizontal, vertical and diagonal increments.
Notice that since $1<\nu<2$, this random field is once, but not twice, mean square differentiable.  Intuitively, we therefore need at least two increments for sufficient de-correlation of the terms in the quadratic variation sum (\ref{PrinceTerm}).  Table \ref{tbl1} displays the root mean squared error (RMSE) for estimating  $\sigma^2 \alpha^{2\nu}$, the true value is approximately $1.03$, and the elements of $M$.  Figure \ref{fig2} plots histograms of the estimates for  $2$ and $3$ increments. It is immediately clear that there is a large reduction in RMSE when using $3$ increments as compared to  $2$ increments (and an additional bias reduction when estimating $\sigma^2\alpha^{2\nu}$). Indeed, by Theorem \ref{FristThem}, more increments leads to more spatial decorrelation and hence a reduction in variance. In this case, $\nu<2<\nu+1$ so that the estimate based on $2$ increments is guaranteed to be consistent but the variance decays at a sub-optimal rate. Since $3>(4\nu+d)/4=2.25$, the variance of the estimate based on $3$ increments decays at the optimal rate. However, Theorem \ref{FristThem} also says that this variance reduction only holds up to a point, after which taking more increments no longer effects the rate of variance decay.   Indeed, it is seen in Table \ref{tbl1} that taking 4 increments do not improve the RMSE nearly as much. 
\begin{table}[h]
  \caption{\label{tbl1} RMSE for estimating $\sigma^2 \alpha^{2\nu}$ and $M$ using $2$, $3$ and $4$ increments.}
 \begin{tabular}{l|ccc}
 & 2 increments & 3 increments & 4 increments \\
 \hline 
$\sigma^2\alpha^{2\nu}$ &  0.1664   & 0.0300   & 0.0289 \\
$M(1,1)$ &  0.0360  &  0.0114  &  0.0113 \\
$M(1,2)$ & 0.0475  &  0.0147  &  0.0147 \\
$M(2,2)$ & 0.0248  &  0.0079  &  0.0079
 \end{tabular}
\end{table}

Our second simulation uses the results of Section \ref{BeyondB} to estimate $c_1$ and $c_2$ when observing   $\sqrt{c_1}\,Y_1 + \sqrt{c_2}\, Y_2$  on $[0,1/\sqrt{2})^2$ at $1000\times 1000$ pixel locations where $c_1=100$, $c_2=36$ and $Y_1$ is independent of $Y_2$.  The random field $Y_1$ has autocovariance  $\frac{9}{10}-|t|^{0.2}+\frac{1}{10}|t|^2$ and  $Y_2$ has autocovariance $\frac{8}{10}-|t|^{0.4}+\frac{2}{10}|t|^2$ which is positive definite on $[0,1/\sqrt{2})^2$ (see \cite{stein:FastSim} for a proof). 
Our estimates of $c_1$ and $c_2$ are defined by 
\begin{align}
\hat c_1&\triangleq Q_n^p/C_{p,\delta_1}\\
\hat c_2&\triangleq n^{\delta_2-\delta_1}\frac{  Q_n^p - \frac{C_{p,\delta_1}}{C_{q,\delta_1}} Q_n^q }{ C_{p,\delta_2} - \frac{C_{p,\delta_1}}{C_{q,\delta_1}} C_{q,\delta_2} }
\end{align}
where $\delta_1=0.2$, $\delta_2=0.4$, $p=2$, $q=3$ and  $C_{p,\delta} \triangleq - |\bs h|^{\delta}\sum_{i,j=0}^p (-1)^{i+j}{p\choose i}{p\choose j} |i-j|^{\delta} $.
 This example was chosen to illustrate the duality when estimating $c_1$ and $c_2$: the smaller $|\delta_1-\delta_2|$  (in relation to the dimension $d$)  the smaller the variance of $\hat c_1$ and $\hat c_2$ but the larger the bias of $\hat c_1$. 
In fact, as the dimension grows, the variance  $\hat c_1$ decreases at a faster rate (proportional to $n^{-d}$ when using enough increments) but the bias decreases at the same asymptotic rate for any $d$ (proportional to $n^{\delta_1-\delta_2}$). 
In our example, since $p=2$ (so the quadratic term $\frac{1}{10}|t|^2$ vanishes), we can explicitly compute the bias using equation (\ref{beyond1}) so that $ \mathsf E \hat c_1=c_1+ c_2 \frac{C_{p,\delta_2}}{C_{p,\delta_1}} n^{\delta_1-\delta_2}$.
Notice that using our estimate of $c_2$  we can now correct the bias in $\hat c_1$.
 The left plot of Figure \ref{fig3} shows two histograms of the estimate $\hat c_1$ and the bias corrected estimate  $\hat c_1- \hat c_2\frac{C_{p,\delta_2}}{C_{p,\delta_1}} n^{\delta_1-\delta_2}$ on the 500 simulated realizations. The right plot of Figure \ref{fig3} shows the histogram of the estimate $\hat c_2$. We can see that not only is it possible to get an estimate of $c_2$, but using it to correct the bias in  $\hat c_1$ reduces the RMSE for estimating $c_1$ (from $7.84$ down to $2.29$).



\begin{figure}[h]
\centering
{\includegraphics[height=3.8in]{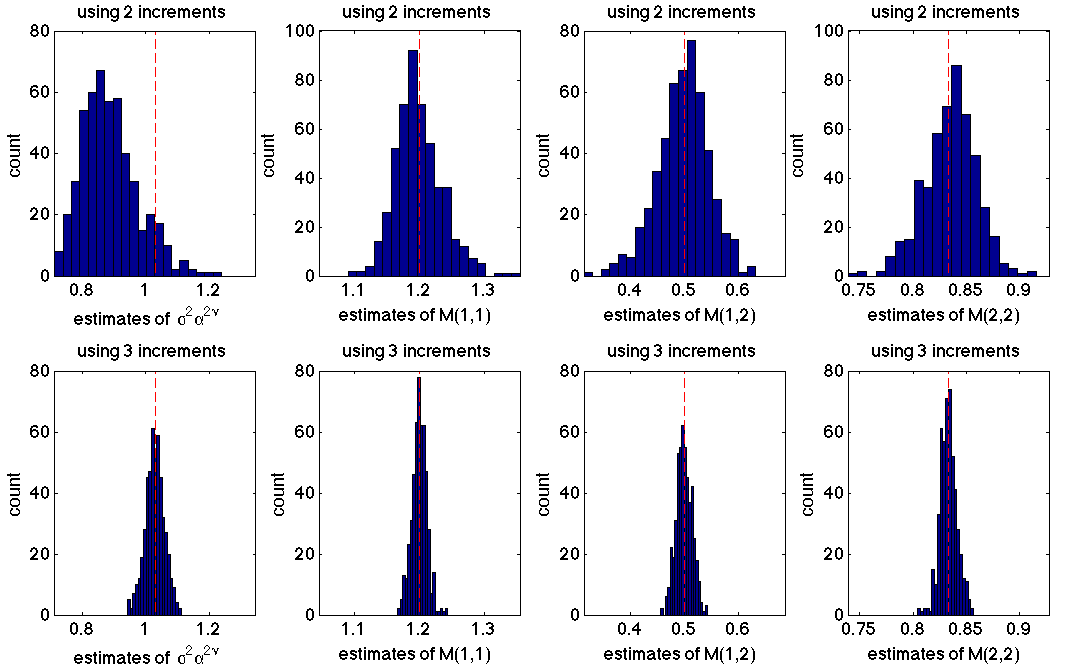}}
\caption{500 independent simulations of a Mat\'ern random field with $\sigma=1.5$, $\alpha=0.8$, $\nu= 1.75$, $M(1,1)=1.2$, $M(1,2)=0.5$, $M(2,1)=0$ and $M(2,2)=1/1.2$ observed on a square grid in $[0,1]^2$ with spacing   $1/55$.
The top row of figures shows the histograms of the estimates of $(\sigma^2 \alpha^{2\nu}, M(1,1), M(1,2), M(2,2))$ using the techniques derived in Section \ref{AnyDim} based on increments of order 2. The bottom row shows the histograms of the estimates using increments of order 3.}
\label{fig2}
\end{figure}

\begin{figure}[h]
\centering
{\includegraphics[height=2.2in]{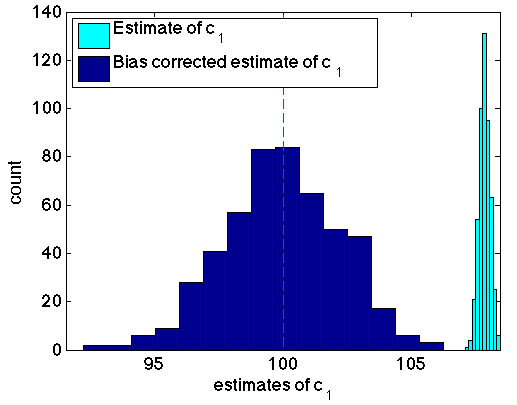}}
{\includegraphics[height=2.2in]{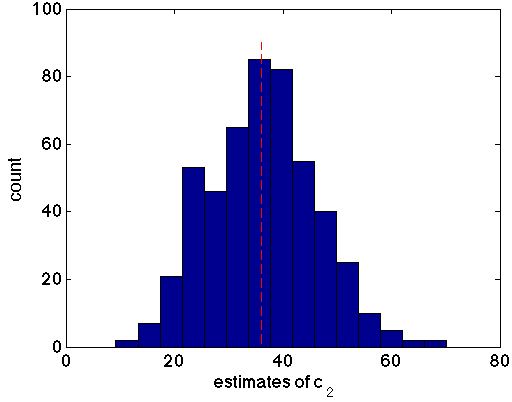}}
\caption{Histograms of the estimates of  $c_1$ and $c_2$ for 500 independent realizations of $\sqrt{c_1}\,Y_1 + \sqrt{c_2}\, Y_2$ where $c_1=100$, $c_2=36$ and $Y_1$ is independent of $Y_2$. The random field $Y_1$ has principle irregular term $-|t|^{0.2}$ and  $Y_2$ has principle irregular term $-|t|^{0.4}$. Each realization is on $[0,1/\sqrt{2})^2$ measured at $1000\times 1000$ pixel locations. \label{fig3}}
\end{figure}

\appendix

\section{Proofs}
We start with some notation.
For a function of two variables $F(\bs s,\bs t)$ let $\Delta ^{(m,n)}_{\bs h} F(\bs s,\bs t)\triangleq\Delta^m_{\bs h}\Delta^n_{\bs h}F(\bs s,\bs t)$ where $\Delta^m_{\bs h}$ acts on the variable $\bs s$ and $\Delta^n_{\bs h}$ acts on the variable $\bs t$. Define $\partial_{\bs h}\triangleq\bs h\cdot \nabla$ to be the  directional derivative in the direction $\bs h$ and $\partial_{\bs h}^{(m,n)} F(\bs s,\bs t)\triangleq \partial^m_{\bs h} \partial^n_{\bs h}F(\bs s,\bs t)$  where $\partial^m_{\bs h}$  acts on the variable $\bs s$ and $\partial^n_{\bs h}$ acts on  $\bs t$.

Let $f(\xi), g(\xi)$ be real valued functions defined on some set $\Xi$ and let $\Xi^\prime\subset\Xi$. We write $f(\xi)\lesssim g(\xi)$ for all $\xi\in \Xi^\prime$ if there there exists a positive constant $c>0$ such that  $|f( \xi)|\leq c\, g( \xi)$ for all $ \xi\in\Xi^\prime$.  Notice that this definition also works for a sequence of functions $f_n, g_n$  by considering the variable $n$ as an argument and replacing $\Xi$ by $\Xi\times \Bbb N$. 


\begin{proof}[\rm \bf Proof of Lemma \ref{FirstLemma}]
 We suppose $\sigma=\alpha=1$ and $M$ is the identity matrix, then rescale for the general case. First note two immediate facts about the $m^\text{th}$ directional increment operator $\Delta_{\bs h/n}^{m}$:   for any function $f\colon \Bbb R^d\rightarrow \Bbb R$ the $m^\text{th}$-increment of $f$ can be computed $\Delta_{\bs h/n}^m f(\bs t)=\sum_{i=0}^m d_i f(\bs t+ i\bs h/n)$ where $d_i=(-1)^{m+i}{m \choose i}$;  The $m^\text{th}$-increment $\Delta_{\bs h/n}^{m}$ annihilates  monomials of degree less than $m$ so that $ \Delta_{\bs h/n}^{(m,m)}|\bs t-\bs s|^{2k}=0$ for all $k=0,\ldots,m-1$. Therefore, by the expansions given on page 375 of  \cite{AShandbook} 
 we have
\[ \Delta_{\bs h/n}^{(m,m)} K(|\bs s-\bs t|)=  \Delta_{\bs h/n}^{(m,m)} \Bigl\{ G_\nu(|\bs s-\bs t|)-\nu G_{\nu+1}(|\bs s-\bs t|)+r(|\bs s-\bs t|) \Bigr\}\]
where $ r(\epsilon)=o(\epsilon^{2\nu+2})$ as $\epsilon\rightarrow 0$. 
Now for a fixed $\bs t_0\in \Bbb R^d$
\[ \mathsf E(\Delta_{\bs h/n}^m Y(\bs t_0))^2= \Delta_{\bs h/n}^{(m,m)} \Bigl\{ K(|\bs s-\bs t|) \Bigr\}\Bigr|_{\bs s,\bs t=\bs t_0}=\mathcal I_1 + \mathcal I_2+\mathcal I_3 \]
where
\begin{align}
\mathcal I_1 & \triangleq   \Delta_{\bs h/n}^{(m,m)} \Bigl\{ G_\nu(|\bs s-\bs t|)\Bigr\}\Bigr|_{\bs s,\bs t=\bs t_0}= \sum_{ij}d_i d_j G_\nu(| (i-j)\bs h/n |)\\
\mathcal I_2 & \triangleq   \Delta_{\bs h/n}^{(m,m)} \Bigl\{ (-\nu)G_{\nu+1}(|\bs s-\bs t|)\Bigr\}\Bigr|_{\bs s,\bs t=\bs t_0}= \sum_{ij}d_i d_j (-\nu)G_{\nu+1}(| (i-j)\bs h/n |)\\
\mathcal I_3 & \triangleq \Delta_{\bs h/n}^{(m,m)} \Bigl\{r(|\bs s-\bs t|))\Bigr\}\Bigr|_{\bs s,\bs t=\bs t_0}= \sum_{ij}d_i d_j r(| (i-j)\bs h/n | )
\end{align}
Notice that $\sum_{ij}d_i d_j G_\nu(| (i-j)\bs h/n |)=|\bs h/n|^{2\nu} \sum_{ij}d_i d_j G_\nu(| i-j |)$. This is obviously true with $\nu\not\in \Bbb Z$. It also holds when $\nu\in\Bbb Z$ since
\begin{equation}
\label{rescaleMe}
G_\nu(| (i-j)\bs h/n |)= | \bs h/n|^{2\nu}\bigl(G_\nu(|i-j|)+|i-j|^{2\nu}\log |\bs h/n| \bigr)
\end{equation}
and  $ \sum_{ij}d_i d_j   |i-j|^{2\nu}=0$ (since $\nu\in\Bbb Z$ and $m>\nu$). Similar arguments can be applied  to $G_{\nu+1}$ when $m>\nu+1$ which gives
$ \mathcal I_1+\mathcal I_2= \frac{a_\nu^m}{n^{2\nu}}+\frac{b_\nu^m}{n^{2\nu+2}}$.
Finally, notice that $r(\epsilon)=o(\epsilon^{2\nu+2})$ implies that $\mathcal I_3=o(n^{-2\nu-2})$. This establishes the claim when $\sigma=\alpha=1$ and $M$ is the identity matrix.
The general result when $\sigma,\nu>0$ and $M\in GL(d,\Bbb R)$ is then established by an easy rescaling argument (using equation (\ref{rescaleMe}) when $\nu\in \Bbb Z$).
\end{proof}

\begin{lemma} 
\label{Pozz}
For $\nu>0$, let $a^m_\nu$ be defined by (\ref{AAA}) and $b^m_\nu$ be defined by  (\ref{BBB}). If $m>\nu$ then $a^m_\nu\neq 0$. If $m>\nu+1$ then $b^m_\nu\neq 0$. Finally, there exits $p,q>\nu+1$ such that  $b^p_\nu-  \frac{a^p_\nu}{a^q_\nu} b^q_\nu\neq 0$.
\end{lemma}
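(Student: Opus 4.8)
The three assertions all concern the scalar sums $\Psi_m(s):=\sum_{i,j=0}^m(-1)^{i+j}\binom mi\binom mj|i-j|^s$, since by the displayed formula for $G_\gamma$ one has $a^m_\nu$ proportional to $\Psi_m(2\nu)$ (and to $\Psi_m'(2\nu)$ when $\nu\in\Bbb Z$) and $b^m_\nu$ proportional to $\Psi_m(2\nu+2)$ (resp.\ $\Psi_m'(2\nu+2)$), with nonzero, $m$-independent proportionality constants. I would first record a clean integral identity: taking $\sigma=\alpha=1$ with $M$ the identity (the general case following by the rescaling already used in the proof of Lemma~\ref{FirstLemma}), Bochner's theorem and the Mat\'ern spectral density $f(\omega)=c_{\nu,d}(1+|\omega|^2)^{-\nu-d/2}$, $c_{\nu,d}>0$, give $\E(\Delta^m_{\bs h/n}Y(\bs t))^2=\int_{\Bbb R^d}(2-2\cos(\bs h\cdot\omega/n))^m f(\omega)\,d\omega$; rescaling $\omega=n u$ and matching the two leading orders with Lemma~\ref{FirstLemma} identifies
\[
a^m_\nu=c_{\nu,d}\int_{\Bbb R^d}\frac{(2-2\cos(\bs h\cdot u))^m}{|u|^{2\nu+d}}\,du,\qquad
b^m_\nu=-c_{\nu,d}\Bigl(\nu+\tfrac d2\Bigr)\int_{\Bbb R^d}\frac{(2-2\cos(\bs h\cdot u))^m}{|u|^{2\nu+d+2}}\,du,
\]
the first integral converging exactly when $m>\nu$ and the second when $m>\nu+1$. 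Parts one and two are then immediate, since each integrand is nonnegative and strictly positive on a set of positive measure: $a^m_\nu>0$ for $m>\nu$ and $b^m_\nu<0$ (in particular nonzero) for $m>\nu+1$. (An elementary alternative: $\Psi_m(s)=2\sum_{k=1}^m(-1)^k\binom{2m}{m+k}k^s$ is an exponential sum in $s$ with $m$ nonzero terms and distinct frequencies $\log k$, hence has at most $m-1$ real zeros counted with multiplicity; since the $m$-th difference kills polynomials of degree $<m$ it already vanishes at the $m-1$ points $s=2,4,\dots,2m-2$, so these are all its real zeros and each is simple — which gives $\Psi_m(2\nu),\Psi_m(2\nu+2)\neq0$ for non-integer $\nu$ and $\Psi_m'(2\nu),\Psi_m'(2\nu+2)\neq0$ for integer $\nu$.)

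For the third part I would argue by contradiction. The negation is that $b^p_\nu/a^p_\nu=b^q_\nu/a^q_\nu$ for all $p,q>\nu+1$; by the displayed formulas this says $\E_m[\,|u|^{-2}\,]$ is independent of $m$, where $\E_m$ denotes expectation under the probability measure on $\Bbb R^d$ proportional to $(2-2\cos(\bs h\cdot u))^m|u|^{-2\nu-d}\,du$. Conditioning on $t=\bs h\cdot u$ freezes the factor $(2-2\cos(\bs h\cdot u))^m$ (constant on each hyperplane $\{\bs h\cdot u=t\}$), so the conditional law of $u$ given $\bs h\cdot u=t$ is $\propto|u|^{-2\nu-d}$ for \emph{every} $m$; a one-line scaling computation then gives $\E_m[\,|u|^{-2}\mid\bs h\cdot u=t\,]=c\,t^{-2}$ with $c>0$ independent of $m$, while the law of $\bs h\cdot u$ under $\E_m$ is $\propto(2-2\cos t)^m|t|^{-2\nu-1}\,dt$ on $\Bbb R$. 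Hence $m$-independence of $b^m_\nu/a^m_\nu$ forces, for some constant $\lambda$,
\[
\int_{\Bbb R}(2-2\cos t)^m\,(t^{-2}-\lambda)\,|t|^{-2\nu-1}\,dt=0\qquad\text{for all integers }m>\nu+1.
\]

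To finish I would multiply this identity by an arbitrary polynomial in $2-2\cos t\in[0,4]$ (allowed since $m$ may be taken arbitrarily large) and use density of polynomials in $C([0,4])$ to conclude that the pushforward under $t\mapsto 2-2\cos t$ of the finite signed measure $(2-2\cos t)^{\lceil\nu\rceil+2}(t^{-2}-\lambda)|t|^{-2\nu-1}\,dt$ is the zero measure; by the coarea formula this means that for almost every level $c\in(0,4)$ the Jacobian-weighted sum of $(t^{-2}-\lambda)|t|^{-2\nu-1}$ over the solutions $t$ of $2-2\cos t=c$ vanishes. But as $c\downarrow0$ the smallest positive solution $t=\theta_c=\arccos(1-c/2)\to0$ (and its mirror $-\theta_c$) contributes a term of exact order $\theta_c^{-2\nu-4}$ to that sum, whereas every other solution stays bounded away from the origin and the whole tail contributes only $O(\theta_c^{-1})$; since $\nu>0$ this is impossible, giving the contradiction. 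Everything up through the boxed identity is bookkeeping; the real obstacle is this last level-set blow-up step — that is, turning the heuristic "$|u|^{-2}$ is not almost everywhere a function of $\bs h\cdot u$ on the support of the relevant weight" into a quantitative contradiction, which is exactly where one must control the behaviour of the weight near the origin.
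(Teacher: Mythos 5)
Your proposal is correct, and while it lands on the same underlying objects as the paper's proof, it reaches them by a genuinely different route, so a comparison is worthwhile. For parts one and two the paper simply observes that $a^m_\nu\propto\text{var}(\Delta_1^m Z_\nu)>0$ and $-b^m_\nu\propto\text{var}(\Delta_1^m Z_{\nu+1})>0$ for one-dimensional intrinsic random functions with generalized covariances $G_\nu$ and $G_{\nu+1}$; your spectral integrals over $\Bbb R^d$ say the same thing, and your parenthetical exponential-sum argument (at most $m-1$ real zeros, all of which are accounted for by the even integers $2,4,\dots,2m-2$ and hence simple) is an attractive, fully elementary alternative that handles the integer and non-integer cases of $\nu$ uniformly. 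For part three, the paper also reduces to the non-constancy in $m$ of the ratio $\int(2-2\cos w)^m|w|^{-2\nu-3}dw\big/\int(2-2\cos w)^m|w|^{-2\nu-1}dw$ (which is exactly what your hyperplane-conditioning step recovers from the $d$-dimensional integrals), and it too exploits that all moments of $2-2\cos W$ agree under the two normalized measures. The difference is in how the contradiction is extracted: the paper deduces equality in law of $2-2\cos W_\nu$ and $2-2\cos W_{\nu+1}$ and then derives two \emph{contradictory} inequalities between the normalizing constants, one from $\mathsf P(\cos W<0)$ (which favors the region $|w|>\pi/2$) and one from the density near $x=0$ (dominated by the branch $g_1(x)\sim\sqrt x$); you instead package everything into a single signed measure $(2-2\cos t)^{m_0}(t^{-2}-\lambda)|t|^{-2\nu-1}\,dt$, show its pushforward under $t\mapsto 2-2\cos t$ is zero, and get the contradiction from a one-sided blow-up of order $\theta_c^{-2\nu-4}$ of its level-set density as $c\downarrow0$. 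Your version needs only one asymptotic regime rather than two complementary comparisons, which is a modest simplification; both hinge on the same phenomenon, namely that the weight $|t|^{-2\nu-3}$ versus $|t|^{-2\nu-1}$ is distinguishable near the origin of the level map.

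One step you should not leave as "bookkeeping": the identification of $a^m_\nu$ and $b^m_\nu$ with the rescaled spectral integrals by "matching the two leading orders with Lemma~\ref{FirstLemma}" requires you to first prove that the integral $n^d\int(2-2\cos(\bs h\cdot u))^m c_{\nu,d}(1+n^2|u|^2)^{-\nu-d/2}du$ itself admits an expansion $An^{-2\nu}+Bn^{-2\nu-2}+o(n^{-2\nu-2})$ with the stated $A,B$; the naive Taylor expansion of $(1+(n|u|)^{-2})^{-\nu-d/2}$ fails for $|u|\lesssim 1/n$, so one must split the domain and check that the region $|u|\le 1/n$ and the second-order remainder both contribute $o(n^{-2})$ (this is where the hypothesis $m>\nu+1$ enters). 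The estimate goes through, but it is essentially a reproof of Lemma~\ref{FirstLemma} from the spectral side; the paper's route via the one-dimensional spectral representation of $G_\nu$, which gives $\sum_{i,j}d_id_jG_\nu(|i-j|)\propto\int|e^{iw}-1|^{2m}|w|^{-2\nu-1}dw$ exactly rather than asymptotically, delivers the same integral formulas with no limiting argument and would shorten your write-up.
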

\begin{proof}
Notice first that $a_\nu^m\propto\text{var}(\Delta_1^m Z_\nu)>0$ where $Z_\nu$ is an intrinsic random function on $\Bbb R$ observed on $\Bbb Z$ with generalized covariance $G_\nu$ (since $\Delta_1^m$ annihilates polynomials of order $m-1$, and $m>\nu$, see \cite{stein:book}). The same reasoning establishes that $-b_\nu^m\propto \text{var}(\Delta_1^m Z_{\nu+1})>0$ when $m>\nu+1$.

For the last part of the lemma we show that there exists $p,q>\nu+1$ such that  
\[
\frac{\text{var}(\Delta_1^{p} Z_\nu)}{\text{var}(\Delta_1^qZ_\nu)} \neq \frac{\text{var}(\Delta_1^pZ_{\nu+1})}{\text{var}(\Delta_1^qZ_{\nu+1})}.
\]
We will argue by contradiction and suppose that for all $k>0$,
\begin{equation}
\label{contradictMe}  
\frac{\text{var}(\Delta_1^{q+k} Z_\nu)}{\text{var}(\Delta_1^qZ_\nu)} = \frac{\text{var}(\Delta_1^{q+k}Z_{\nu+1})}{\text{var}(\Delta_1^qZ_{\nu+1})}.
\end{equation}
By a spectral representation of $G_\nu$ (see \cite{stein:book} page 36) and an easy induction establishes that $\text{var}(\Delta_1^{q+k} Z_\nu)=\int |e^{iw}-1|^{2q+2k} |w|^{-2\nu-1}\,dw $ and $\text{var}(\Delta_1^{q+k} Z_{\nu+1})=\int |e^{iw}-1|^{2q+2k} |w|^{-2\nu-3}\,dw $. 
Notice also that $|e^{iw}-1|^2=2-2\cos\,w$.
Let $F_{\nu}$ and $F_{\nu+1}$ be two probability measures on $\Bbb R$ defined by 
\begin{align*}
F_\nu(B)&\triangleq\frac{1}{\text{var}(\Delta_1^q Z_\nu)} \int_B  (2-2\cos \, w)^{q} |w|^{-2\nu-1}\,dw \\
 F_{\nu+1}(B)&\triangleq\frac{1}{\text{var}(\Delta_1^q Z_{\nu+1})}\int_B  (2-2\cos \, w)^{q} |w|^{-2\nu-3}\,dw.
 \end{align*}
Our assumption (\ref{contradictMe}) then becomes
\begin{equation}
\label{contradictMe2}
 \int (2-2\cos \, w)^{k} d F_{\nu}(w)= \int (2-2\cos \, w)^{k} d F_{\nu+1}(w), 
 \end{equation}
for all $k>0$. 
 Notice that the variances $\text{var}(\Delta_1^q Z_\nu)$ and $\text{var}(\Delta_1^q Z_{\nu+1})$ serve as the normalizing constants so that $F_\nu$ and $F_{\nu+1}$ have total mass one. In what follows we show that the normalizing constants satisfy both $\text{var}(\Delta_1^q Z_\nu)>\text{var}(\Delta_1^q Z_{\nu+1})$ and $\text{var}(\Delta_1^q Z_\nu)<\text{var}(\Delta_1^q Z_{\nu+1})$ to establish the desired contradiction.

By the equalities in (\ref{contradictMe2}), the random variables $2(1-\cos W_\nu)$ and $2(1-\cos W_{\nu+1})$ have the same moments when $W_{\nu}\sim F_{\nu}$ and $W_{\nu+1}\sim F_{\nu+1}$. In addition, $0\leq 2(1-\cos W_\nu) \leq 4$ and $0\leq 2(1-\cos W_{\nu+1}) \leq 4$ so that the moment generating functions are both finite in a non-empty radius of the origin. Therefore $
 2(1-\cos W_\nu)\overset{\mathcal L}= 2(1-\cos W_{\nu+1})$,
 where $\overset{\mathcal L}=$ denotes equality in law.
This gives $\mathsf P(\cos W_\nu<0)=\mathsf P(\cos W_{\nu+1}<0)$, for example. However
\begin{align*}
 \mathsf P(\cos W_\nu<0)&= \frac{1}{\text{var}(\Delta_1^qZ_\nu)} \int  \bs 1_{\{\cos w<0\}}(2-2\cos w )^q |w|^{-2\nu-1}dw \\
 &> \frac{1}{\text{var}(\Delta_1^qZ_\nu)} \int  \bs 1_{\{\cos w<0\}}(2-2\cos w )^q |w|^{-2\nu-3}dw,
 \end{align*}
by the fact that $\cos w<0\Rightarrow |w|>\pi/2$. Therefore
\begin{equation}
\label{halfway}
\text{var}(\Delta_1^q Z_{\nu+1})< \text{var}(\Delta_1^q Z_\nu).
\end{equation}

To show the contradicting inequality let's start by computing the density of these two random variables. The idea is to  show that the non-normalized (i.e.\! without the term $\text{var}(\Delta_1^q Z_\nu)$) density of $2(1-\cos W_\nu)$ is strictly smaller than the non-normalized density of $2(1-\cos W_{\nu+1})$ in a positive neighborhood of $0$. In particular, the density of $2(1-\cos W_\nu)$
 can be written as $2\sum_{k=1}^\infty f_{W_\nu}(g_k(x)) |g_k(x)^\prime |$
 where the $g_k$'s are the different positive branches of the inverse $\cos^{-1}(1-x/2)$ and $f_{W_{\nu}}(w)\triangleq (2-2\cos w)^q |w|^{-2\nu-1}/\text{var}(\Delta_1^q Z_\nu)$ is the density of $W_\nu$. This simplifies to
\[ \frac{2x^q}{\text{var}(\Delta_1^q Z_\nu) }\sum_{k=1}^\infty  \frac{ |g_k(x)^\prime |}{|g_k(x)|^{2\nu+1}}= \frac{2x^q}{\text{var}(\Delta^q Z_\nu)\sqrt{x-x^2/4} }\sum_{k=1}^\infty  {|g_k(x)|^{-2\nu-1}}\]
for $0<x<4$. Notice that $g_1(x)\sim \sqrt{x}$ as $x\rightarrow 0$ and $g_k(x)\sim 2\pi \lfloor k/2\rfloor$ as $x\rightarrow 0$ for all $k>1$.  Therefore the term $g_1$ dominates the sum when $x$ is small. In particular for all $x>0$ sufficiently small 
we have
\begin{align}
f_{2-2\cos W_\nu}(x)&< \frac{2x^q}{\text{var}(\Delta_1^q Z_\nu)\sqrt{x-x^2/4} }\sum_{k=1}^\infty  {|g_k(x)|^{-2\nu-3}} \\
&=\frac{\text{var}(\Delta_1^q Z_{\nu+1})}{\text{var}(\Delta_1^q Z_\nu)} f_{2-2\cos W_{\nu+1}}(x).
\end{align}
Since $f_{2-2\cos W_\nu}(x)$ and $f_{2-2\cos W_{\nu+1}}(x)$ have the same integrate integrals over Borel subsets of $(0,4)$, we must have
$ \text{var}(\Delta_1^q Z_{\nu+1})> \text{var}(\Delta_1^q Z_\nu)$.
 This contradicts (\ref{halfway}) and therefore establishes the lemma.
\end{proof}

\begin{lemma} 
\label{ksqrt}
For any $\nu>0$, $T>0$,
 \[ \Bigl| \frac{d^p}{dt^p} t^{\nu/2} \mathcal K_{\nu}(\sqrt{t})\Bigr| \lesssim \begin{cases}
  1,& \text{when $p< \nu$;} \\
  |\log t|, &   \text{when $p= \nu$;} \\
   t^{\nu-p}, & \text{when $p> \nu$;} 
    \end{cases}
  \]
 as $t$ ranges in the interval $(0,T)$ where $\mathcal K_\nu$ is the modified Bessel function of the second kind of order $\nu$. \end{lemma}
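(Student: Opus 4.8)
The plan is to reduce the entire statement to the classical small-argument behaviour of the modified Bessel function by noticing that differentiating $t^{\nu/2}\mathcal K_\nu(\sqrt t)$ with respect to $t$ merely \emph{lowers the order} of the Bessel function. First I would record the derivative identity
\[
\frac{d}{dz}\bigl(z^\nu \mathcal K_\nu(z)\bigr)=-z^\nu\mathcal K_{\nu-1}(z),
\]
valid for every real order $\nu$; it follows at once from the standard recurrences $\mathcal K_\nu^{\prime}=-\frac12\bigl(\mathcal K_{\nu-1}+\mathcal K_{\nu+1}\bigr)$ and $\mathcal K_{\nu-1}-\mathcal K_{\nu+1}=-\frac{2\nu}{z}\mathcal K_\nu$. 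Applying the chain rule under the substitution $z=\sqrt t$, $\frac{d}{dt}=\frac{1}{2\sqrt t}\frac{d}{dz}$, gives
\[
\frac{d}{dt}\Bigl(t^{\nu/2}\mathcal K_\nu(\sqrt t)\Bigr)=-\frac12\,t^{(\nu-1)/2}\mathcal K_{\nu-1}(\sqrt t),
\]
and iterating this $p$ times (using $\mathcal K_{-\mu}=\mathcal K_\mu$ to interpret negative orders) yields
\[
\frac{d^p}{dt^p}\Bigl(t^{\nu/2}\mathcal K_\nu(\sqrt t)\Bigr)=\Bigl(-\frac12\Bigr)^{p}\, t^{(\nu-p)/2}\mathcal K_{\nu-p}(\sqrt t).
\]

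Writing $\mu\triangleq\nu-p$, the lemma is thus equivalent to the single claim that, on $(0,T)$, $\bigl|t^{\mu/2}\mathcal K_\mu(\sqrt t)\bigr|$ is $\lesssim 1$, $\lesssim|\log t|$, or $\lesssim t^{\mu}$ according as $\mu>0$, $\mu=0$, or $\mu<0$ --- and these three alternatives are precisely the three cases $p<\nu$, $p=\nu$, $p>\nu$ of the statement. Next I would invoke the behaviour of $\mathcal K_\mu(z)$ as $z\downarrow 0$: for $\mu>0$, $\mathcal K_\mu(z)\sim 2^{\mu-1}\Gamma(\mu)z^{-\mu}$, so that $t^{\mu/2}\mathcal K_\mu(\sqrt t)\to 2^{\mu-1}\Gamma(\mu)$, a finite constant; for $\mu=0$, $\mathcal K_0(z)=-\log z+O(1)$, so $|\mathcal K_0(\sqrt t)|=O(|\log t|)$; and for $\mu<0$ one writes $\mathcal K_\mu=\mathcal K_{|\mu|}$ and the same asymptotic gives $t^{\mu/2}\mathcal K_\mu(\sqrt t)\sim 2^{|\mu|-1}\Gamma(|\mu|)\,t^{\mu}$. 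This establishes each bound on a neighbourhood $(0,\delta)$ of the origin. Since $t\mapsto t^{\mu/2}\mathcal K_\mu(\sqrt t)$ is continuous and positive on $(0,\infty)$ it is bounded on the compact interval $[\delta,T]$, where the right-hand side of the desired bound is bounded below; enlarging the implied constant then extends the estimate to all of $(0,T)$.

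The real content is confined to the derivative identity of the first paragraph; once that is in hand the result is an immediate consequence of tabulated Bessel asymptotics, and --- pleasantly --- no case split on whether $\nu$ is an integer is required, the logarithm in the middle case appearing automatically because $\nu-p=0$ there. The main point to be careful about is simply verifying the derivative identity for arbitrary (possibly negative, possibly non-integer) orders. As an alternative one could substitute the series expansion $t^{\nu/2}\mathcal K_\nu(\sqrt t)=A(t)+t^{\nu}B(t)$ with $A,B$ entire --- with an extra $t^{\nu}\log t$ term when $\nu\in\mathbb Z_{\ge 0}$ --- and differentiate term by term via Leibniz's rule; this works too but forces separate treatment of integer and non-integer $\nu$, so the recursion route is cleaner. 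One mild imprecision worth flagging: in the case $p=\nu$ the stated bound is to be read near $t\downarrow 0$ (the function $|\log t|$ vanishes at $t=1$); on a fixed interval $(0,T)$ one should really write $1+|\log t|$ on the right, which is immaterial for the way the lemma is used.
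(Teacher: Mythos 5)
Your proof is correct, and it takes a genuinely different route from the paper's. The paper expands $t^{\nu/2}\mathcal K_\nu(\sqrt t)$ via the series representation from Abramowitz and Stegun as $F_1(t)+t^{\nu}\log(t)F_3(t)$ (integer $\nu$) or $F_4(t)+t^{\nu}F_5(t)$ (non-integer $\nu$), with the $F_j$ entire, and then differentiates term by term using Leibniz's rule, bounding the dominant contribution in each of the three regimes $p<\nu$, $p=\nu$, $p>\nu$; this forces exactly the integer/non-integer case split you anticipated in your closing remarks. Your route instead uses the recurrence $\frac{d}{dz}(z^\nu\mathcal K_\nu(z))=-z^\nu\mathcal K_{\nu-1}(z)$ together with the substitution $z=\sqrt t$ to obtain the closed form $\frac{d^p}{dt^p}\bigl(t^{\nu/2}\mathcal K_\nu(\sqrt t)\bigr)=(-\tfrac12)^p\,t^{(\nu-p)/2}\mathcal K_{\nu-p}(\sqrt t)$, after which the three cases are just the tabulated small-argument asymptotics of $\mathcal K_\mu$ for $\mu>0$, $\mu=0$, $\mu<0$ (using $\mathcal K_{-\mu}=\mathcal K_\mu$). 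This is cleaner and actually yields more than the lemma asks for: exact leading constants and a uniform treatment with no case split on whether $\nu$ is an integer, the logarithm emerging automatically from $\mathcal K_0$. What the paper's expansion-based argument buys in exchange is that it is the template the author reuses almost verbatim for the non-Bessel covariances in Section 3 (the $|t|^{\delta}$ and $e^{-|t|^{\delta}}$ models), where no such order-lowering recurrence is available. Your flagged imprecision about $|\log t|$ vanishing at $t=1$ is real but equally present in the paper's version of the bound, and is immaterial to how the lemma is applied (only the behaviour as $t\downarrow 0$ matters downstream in Lemma 4).
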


\begin{proof}
Using the expansions for $\mathcal K_\nu$ found in \cite{AShandbook} (page 375) we can write
\begin{align}
\label{eqnBessel}
 t^{\nu/2}\mathcal K_\nu(\sqrt{t}) &= \begin{cases}
 F_1(t)  + t^{\nu}\log(t) F_3(t); &\text{when $\nu=0,1,2,\ldots$}\\
 F_4(t) + t^{\nu} F_5(t); & \text{otherwise}
 \end{cases}
 \end{align}
where the $F_j(t)$'s are of the form $\sum_{k=0}^\infty c_k t^{k}$ where the $c_k$'s decay fast enough so that the series converges absolutely for  all $t\in(0, \infty)$ and all it's derivatives exist and are bounded on $(0,T)$. This immediately establishes that when $p< \nu$, $ \bigl| \frac{d^p}{dt^p} t^{\nu/2} \mathcal K_{\nu}(\sqrt{t})\bigr| \lesssim 1$ for all $t\in(0,T)$ since 
both $\frac{d^p}{dt^p} (t^{\nu})$ and $\frac{d^p}{dt^p} (t^{\nu} \log t)$ are continuous and bounded on $(0,T)$. 
%

When $p> \nu$ and $\nu\not\in\Bbb Z$ we have that 
$t^{\nu}\lesssim \frac{d}{dt} (t^{\nu}) \lesssim \cdots  \lesssim \frac{d^p}{dt^p} (t^{\nu})  \lesssim  t^{\nu-p}$
as $t$ ranges in the bounded interval $(0,T)$. Similarly, when $p> \nu$ and $\nu\in\Bbb Z$ we have
\[ t^{\nu}\log t\lesssim \frac{d}{dt} (t^{\nu} \log t) \lesssim \cdots  \lesssim \frac{d^p}{dt^p} (t^{\nu} \log t ) \lesssim t^{\nu-p}.  \]
 Finally, when $p=\nu$, $\frac{d^p}{dt^p} t^{\nu} \log t\propto \log t+c_p$.
The lemma now follows by equation (\ref{eqnBessel}) and the fact that the derivative of a product satisfies $(fg)^{(p)}=\sum_{k=0}^p {p \choose k} f^{(p)} g^{(p-k)}$.

\end{proof}

\begin{lemma} \label{boundonderiv} Suppose  $K(t)$ is the isotropic Mat\'ern autocovariance function defined in (\ref{DefineK}) for fixed parameters $\sigma, \alpha,\nu>0$.
 Then for any integer $m>\nu$, nonzero vector $\bs h\in \Bbb R^d$, matrix $M\in GL(d,\Bbb R)$ and bounded set $\Omega\subset \Bbb R^d$
\begin{equation}
\label{lemmaIneq1}
 \bigl|\partial_{\bs h}^{(m,m)} \bigl[K(|M\bs s-M\bs t|) \bigr]\bigr|  \lesssim |\bs s-\bs t|^{2\nu-2m}
\end{equation}
for all $\bs s,\bs t\in \Omega$ such that $\bs s\neq \bs t$.
\end{lemma}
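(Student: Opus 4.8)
The plan is to write $K(|M\bs s-M\bs t|)$ as the composition of the scalar function appearing in Lemma~\ref{ksqrt} with a quadratic polynomial, and then differentiate by the chain rule. From the definition \eqref{DefineK}, setting $w(\bs s,\bs t)\triangleq\alpha^2|M\bs s-M\bs t|^2=(\alpha|M\bs s-M\bs t|)^2$,
\[
K(|M\bs s-M\bs t|)=\frac{\sigma^2}{\Gamma(\nu)2^{\nu-1}}\,\psi\bigl(w(\bs s,\bs t)\bigr),\qquad \psi(t)\triangleq t^{\nu/2}\mathcal K_\nu(\sqrt t),
\]
so it suffices to bound $\partial_{\bs h}^{(m,m)}\bigl[\psi(w)\bigr]$. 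Here $w$ is a polynomial of degree two in $(\bs s,\bs t)$; $w>0$ whenever $\bs s\ne\bs t$ (since $M$ is invertible), so $\psi\circ w$ is smooth there; and since $M\in GL(d,\Bbb R)$ and $\Omega$ is bounded, $w\asymp|\bs s-\bs t|^2$ uniformly on $\Omega$ and $w$ ranges over a bounded interval $(0,T)$.

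First I would record the $\partial_{\bs h}$-derivatives of $w$. Because $w$ is quadratic, every $\partial_{\bs h}$-derivative of total order $\ge 3$ (in any mixture of the two arguments) vanishes identically; the second-order ones are the constants $\partial_{\bs h}^{(2,0)}w=\partial_{\bs h}^{(0,2)}w=-\partial_{\bs h}^{(1,1)}w=2\alpha^2|M\bs h|^2$; and the first-order ones, $\partial_{\bs h}^{(1,0)}w=-\partial_{\bs h}^{(0,1)}w=2\alpha^2\langle M(\bs s-\bs t),M\bs h\rangle$, depend only on $\bs s-\bs t$ and therefore satisfy $|\partial_{\bs h}^{(1,0)}w|,|\partial_{\bs h}^{(0,1)}w|\lesssim|\bs s-\bs t|$ on $\Omega$. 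This last decay on the diagonal is what makes the estimate work.

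Next I would expand $\partial_{\bs h}^{(m,m)}\bigl[\psi(w)\bigr]$ by Fa\`a di Bruno's formula, viewing $\partial_{\bs h}^{(m,m)}$ as a composite of $2m$ first-order operators ($m$ acting on $\bs s$, $m$ on $\bs t$). The result is a finite sum of terms $\psi^{(k)}(w)\prod_{B}\partial_B w$, where $B$ runs over the blocks of a partition of those $2m$ operators and $\partial_B$ is the composite of the operators in $B$. Only partitions all of whose blocks have size $1$ or $2$ survive; if such a partition has $a$ singletons and $b$ pairs, then $a+2b=2m$ and $k=a+b$, so $b=2m-k$, $a=2k-2m$, and in particular $m\le k\le 2m$. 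For such a term, $\prod_B\partial_B w$ is a product of $b$ bounded constants and $a$ first derivatives of $w$, whence $\bigl|\prod_B\partial_B w\bigr|\lesssim|\bs s-\bs t|^{a}=|\bs s-\bs t|^{2k-2m}$; and $|\psi^{(k)}(w)|\lesssim w^{\nu-k}\lesssim|\bs s-\bs t|^{2\nu-2k}$ by Lemma~\ref{ksqrt}, where we use that $k\ge m>\nu$ (so we land in the third case of that lemma, with no logarithmic factor) and that $w\gtrsim|\bs s-\bs t|^2$. Multiplying, every surviving term is $\lesssim|\bs s-\bs t|^{2\nu-2m}$; summing the finitely many terms and reinstating the constant $\sigma^2/(\Gamma(\nu)2^{\nu-1})$ gives \eqref{lemmaIneq1}.

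The only step with real content is the bookkeeping in the last paragraph: the lower bound $k\ge m$, forced by $w$ being quadratic, is exactly balanced against the factor $|\bs s-\bs t|^{2k-2m}$ gained from the $a=2k-2m$ first-derivative factors of $w$ vanishing on the diagonal, so that all terms collapse to the common order $|\bs s-\bs t|^{2\nu-2m}$; had one bounded the first derivatives of $w$ only by a constant, the terms with $k>m$ would have been controlled merely by $|\bs s-\bs t|^{2\nu-4m}$, which is too weak. The remaining ingredients---smoothness of $\psi$ on $(0,\infty)$, the equivalence $w\asymp|\bs s-\bs t|^2$ from invertibility of $M$, and boundedness of the polynomial factors over the bounded set $\Omega$---are routine.
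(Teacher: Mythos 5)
Your proof is correct and follows essentially the same route as the paper's: both express the covariance as a smooth scalar function composed with the squared distance, bound the derivatives of that scalar function via Lemma~\ref{ksqrt}, and use the fact that the first derivatives of the quadratic vanish on the diagonal (giving the factor $|\bs s-\bs t|^{2k-2m}$) to balance the singularity $w^{\nu-k}$ of the outer derivatives. Your Fa\`a di Bruno bookkeeping is marginally sharper --- you note that only outer derivatives of order $k\ge m>\nu$ actually occur, so only the third case of Lemma~\ref{ksqrt} is ever needed, whereas the paper's expansion also treats the cases $i<\nu$ and $i=\nu$ --- but this is a presentational refinement, not a different argument.
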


\begin{proof}  First notice that it is sufficient to show the claim when $M$ is the identity matrix and $\alpha=1$ (extending to general $M$ and $\alpha$ follows by the chain rule for derivatives).
Define $K_{sq}(t)\triangleq K(\sqrt{t})$ and $F(\bs s,\bs t)\triangleq |\bs s-\bs t|^2$ so that
$\partial_{\bs h}^{(m,m)} \bigl[K(|\bs s-\bs t|)\bigr] = \partial_{\bs h}^{(m,m)} \bigl[K_{sq}(F(\bs s,\bs t))\bigr]$.
Also let $\partial_{\bs h}^*$ denote a generic directional derivative on either the variable $\bs s$ or $\bs t$.  
By generic I mean that $(\partial_{\bs h}^*)^k F$ denotes $\partial_{\bs h}^{(i,j)}F$ for some $i+j=k$  and $(\partial_{h}^* F)^k=\partial_{\bs h}^* F\cdots \partial_{\bs h}^* F$ where each $\partial_{\bs h}^*$ could be with respect to $\bs s$ or $\bs t$.
Now by successive application of the directional derivatives $\partial_{\bs h}^*$ we get that  
\begin{equation}
\label{sumOfS}
\partial_{\bs h}^{(m,m)} \bigl[K_{sq}(F(\bs s,\bs t))\bigr]=\sum_{i=1}^{2m} \sum_{\substack{0\leq j\leq i \\j+i\leq 2m }}K_{sq}^{(i)}(F(\boldsymbol s,\boldsymbol t)) (\partial_{\bs h}^*F(\boldsymbol s,\boldsymbol t))^{i-j} B_{ij} 
\end{equation}
where each $B_{ij}$ is uniformly bounded on $\Omega^2$. 
The functions $B_{ij}$ are uniformly bounded   by the nice fact that $(\partial_{\bs h}^*)^k F(\bs s,\bs t)\lesssim 1$ on $\Omega^2$ when $k\geq 2$.

We will bound the terms of the sum  (\ref{sumOfS})  when $i<\nu$, $i>\nu$, and $i=\nu$ separately.
Notice first that since $i\geq j$ we have that
\begin{align}
\label{uuu}
\bigl|\partial_{\bs h}^* F(\bs s,\bs t)\bigr|^{i-j}&\lesssim |\bs s-\bs t|^{i-j},\quad\text{for all $\bs s,\bs t\in \Omega$.} 
\end{align}
This implies, by Lemma \ref{ksqrt}, that the terms in the sum  (\ref{sumOfS}),  for which $i<\nu$, are bounded. 
When $i> \nu$
\begin{align*}
|K_{sq}^{(i)}(F(\bs s,\bs t)) (\partial_{\bs h}^*F(\bs s,\bs t))^{i-j} B_{ij}|
&\lesssim     |F(\bs s,\bs t)|^{\nu-i}|\bs s-\bs t|^{i-j},\quad\text{by (\ref{uuu}) and  Lemma \ref{ksqrt}}  \\
&=  |\bs s-\bs t|^{2\nu-(i+j)}   \\
&\lesssim   |\bs s-\bs t|^{2\nu-2m},\quad\text{since $i+j\leq 2m$,}
\end{align*}
where the inequality holds for all $\bs s,\bs t\in \Omega$ such that $|\bs s-\bs t|>0$ (note that we use the fact that $\Omega$ is bounded implies $|\bs s-\bs t|<T$ for some $T$).
For the last case, $i= \nu$,   a similar argument establishes
\begin{align*}
 |K_{sq}^{(i)}(F(\bs s,\bs t)) (\partial_{\bs h}^*F(\bs s,\bs t))^{i-j} B_{ij}|&\lesssim |\bs s-\bs t|^{i-j} |\log F(\bs s,\bs t)| \\
 &\lesssim |\bs s-\bs t|^{2\nu-2m} 
 \end{align*}
for all $\bs s,\bs t\in \Omega$ such that $|\bs s-\bs t|>0$.
Therefore  $ \partial_{\bs h}^{(m,m)} \bigl[K_{sq}(F(\bs s,\bs t))\bigr]\lesssim  |\bs s-\bs t|^{2\nu-2m}$
for all $\bs s,\bs t\in \Omega$ such that $\bs s\neq \bs t$.
\end{proof}

\begin{lemma} \label{suprem} Let $\bs h$ be a nonzero vector in $\Bbb R^d$,  $\nu>0$ and $H$ be the $d\times m$ matrix defined by  
\begin{equation}
\label{DefineH}
H\triangleq\underbrace{(\bs h,\cdots,\bs h)}_{\text{$m$ columns}}.
\end{equation}
 If $m$ is a positive integer greater than $\nu$ then  
\[\sup_{\bs {\xi,\eta} \in [0,1]^m} |\bs {i-j} + H (\bs \xi - \bs \eta)/n |^{2\nu-2m}\lesssim |\bs {i-j}|^{2\nu-2m} \]
for all positive integers $n$ and $\bs i,\bs j\in \Omega_n$ such that $|\bs i-\bs j|> |(m+1)\bs h/n|$.
\end{lemma}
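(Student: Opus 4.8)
The plan is to reduce the statement to a one-line triangle-inequality estimate combined with the monotonicity of the map $x \mapsto x^{2\nu-2m}$. Since $m$ is an integer with $m > \nu$, the exponent $2\nu - 2m$ is strictly negative, so this power function is decreasing on $(0,\infty)$. Hence, to bound the supremum from above it is enough to bound the quantity $|\bs {i-j} + H(\bs\xi-\bs\eta)/n|$ from below, uniformly over $\bs\xi,\bs\eta \in [0,1]^m$.

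The first step is to notice that the perturbation term is collinear with $\bs h$ and small. Because every column of $H$ equals $\bs h$, we have $H(\bs\xi-\bs\eta) = \bigl(\sum_{k=1}^m (\xi_k-\eta_k)\bigr)\bs h = t\,\bs h$ for a scalar $t$ with $|t| \le m$, since each $\xi_k-\eta_k \in [-1,1]$. Therefore $|H(\bs\xi-\bs\eta)/n| \le m|\bs h|/n$ for every $\bs\xi,\bs\eta \in [0,1]^m$. In particular the hypothesis $|\bs i - \bs j| > |(m+1)\bs h/n| = (m+1)|\bs h|/n$ rules out $\bs i = \bs j$ and gives $|\bs h|/n < |\bs i - \bs j|/(m+1)$.

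Next I would apply the triangle inequality: for any $\bs\xi,\bs\eta \in [0,1]^m$,
\[
|\bs {i-j} + H(\bs\xi-\bs\eta)/n| \ge |\bs {i-j}| - \frac{m|\bs h|}{n} > |\bs {i-j}| - \frac{m}{m+1}|\bs {i-j}| = \frac{1}{m+1}|\bs {i-j}| > 0.
\]
Raising to the negative power $2\nu-2m$ reverses the inequality, so $|\bs {i-j} + H(\bs\xi-\bs\eta)/n|^{2\nu-2m} \le (m+1)^{2m-2\nu}\,|\bs {i-j}|^{2\nu-2m}$. Taking the supremum over $\bs\xi,\bs\eta \in [0,1]^m$ and observing that the constant $(m+1)^{2m-2\nu}$ depends only on $m$ and $\nu$, and not on $n$, $\bs i$, or $\bs j$, yields precisely the asserted bound.

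There is no real obstacle here; the two points that merit a moment's care are that the exponent $2\nu - 2m$ is negative (so the power inequality flips relative to the naive guess), and that the image of the box difference under $H$ is the one-dimensional segment $\{t\bs h : |t|\le m\}$ --- it is this collinearity that produces the clean perturbation bound $m|\bs h|/n$ and hence a constant independent of $n$.
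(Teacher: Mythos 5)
Your proof is correct and follows essentially the same route as the paper's: both reduce the perturbation $H(\bs\xi-\bs\eta)/n$ to a scalar multiple of $\bs h$ of length at most $m|\bs h|/n$, apply the triangle inequality with the hypothesis $|\bs i-\bs j|>(m+1)|\bs h|/n$ to get the lower bound $|\bs i-\bs j|/(m+1)$, and then use the negativity of the exponent $2\nu-2m$ to obtain the constant $(m+1)^{2m-2\nu}$. No gaps.
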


\begin{proof} First notice that $\sup_{\bs {\xi,\eta} \in [0,1]^m}|\bs {i-j} + H (\bs \xi - \bs \eta)/n |^{2\nu-2m} = \sup_{-1\leq \tau \leq 1}|\bs {i-j} + m \bs h \tau/n |^{2\nu-2m}$. Now  for any $-1\leq \tau\leq 1$, positive integer $n$ and $\bs i,\bs j\in \Omega_n$ such that $|\bs i-\bs j|> |(m+1)\bs h/n|$, we have
\begin{align*}
|\bs i-\bs j+ m\bs h\tau/n|&\geq |\bs i-\bs j|-m|\tau||\bs h|/n\\
&\geq |\bs i-\bs j|-|\bs i-\bs j|\frac{m}{m+1}.
\end{align*}
The last line follows from the assumption that $|\bs i-\bs j|> (m+1)|\bs h|/n$ which implies $\frac{m}{m+1}|\bs i-\bs j|> m|\bs h|/n$.
Therefore
\[ \sup_{\bs {\xi,\eta} \in [0,1]^m}\bigl|\bs {i-j} + H (\bs \xi - \bs \eta)/n \bigr|^{2\nu-2m} \leq \Bigl(1-\frac{m}{m+1}\Bigr)^{2\nu-2m} |\bs {i-j}  |^{2\nu-2m}. \]
\end{proof}

\begin{lemma}
\label{bounds1} 
Let $Y$ be a mean zero, geometric anisotropic $d$-dimensional Mat\'ern Gaussian random field with parameters $(\sigma, \alpha, \nu, M)$ and let $\Omega$ be a bounded, open subset of $\Bbb R^d$.
 Fix a positive integer $m>\nu$ and a non-zero vector $\bs h\in \Bbb R^d$.  Let $\Sigma$ to be the covariance matrix of the increments $ \Delta_{\bs h/n}^m Y(\bs i)$ as $\bs i$  ranges in the set $\bs i\in \Omega_n$ so that 
\begin{equation} 
\label{covmat}
\Sigma(\bs i,\bs j)\triangleq \mathsf E\bigl(  \Delta_{\bs h/n}^m Y(\bs i)  \Delta_{\bs h/n}^m Y(\bs j)\bigr)\end{equation}
for all $\bs i,\bs j\in \Omega_n$.
  Then there exists an $N>0$ such that   
\begin{equation}
   |\Sigma(\bs i,\bs j)|\lesssim n^{-2m} |\bs i-\bs j|^{2\nu-2m}
   \end{equation}
   for all $n>N$, and $\bs i,\bs j\in\Omega_n$ such that  $|\bs i-\bs j|> |(m+1)\bs h/n|$. Moreover,
   \begin{equation}
 |\Sigma(\bs i,\bs j)|\lesssim n^{-2\nu}
 \end{equation}
for all $n>N$ and   $\bs i,\bs j\in\Omega_n$ such that  $|\bs i-\bs j|\leq |(m+1)\bs h/n|$.
\end{lemma}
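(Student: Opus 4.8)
The plan is to reduce $\Sigma(\bs i,\bs j)$ to a twice-iterated directional increment of the covariance kernel $K(|M\bs s - M\bs t|)$ and then apply the derivative bound from Lemma \ref{boundonderiv}, handling the "far" regime ($|\bs i-\bs j| > |(m+1)\bs h/n|$) and the "near" regime separately. First I would observe that
\[
\Sigma(\bs i,\bs j) = \mathsf E\bigl(\Delta_{\bs h/n}^m Y(\bs i)\,\Delta_{\bs h/n}^m Y(\bs j)\bigr) = \Delta_{\bs h/n}^{(m,m)}\bigl[K(|M\bs s - M\bs t|)\bigr]\Bigr|_{\bs s = \bs i,\ \bs t = \bs j},
\]
using the notation for the two-variable increment operator from the start of the Appendix. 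For the far regime, the key tool is the integral (Taylor-with-remainder / repeated fundamental theorem of calculus) representation of the $m$-fold increment in each variable: one writes $\Delta_{\bs h/n}^{(m,m)} F$ as an integral over $\bs\xi,\bs\eta \in [0,1]^m$ of the mixed directional derivative $(\tfrac{1}{n})^{2m}\,\partial_{\bs h}^{(m,m)} F$ evaluated at the shifted points $\bs i + H\bs\xi/n$ and $\bs j + H\bs\eta/n$, where $H$ is the matrix in (\ref{DefineH}). This is the standard fact that $\Delta_{\bs h/n}^m g(\bs t) = (1/n)^m \int_{[0,1]^m} \partial_{\bs h}^m g(\bs t + H\bs\xi/n)\,d\bs\xi$.

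Next I would bound the integrand. By Lemma \ref{boundonderiv} (with the same $m>\nu$, the same $\bs h$ and $M$, and $\Omega$ enlarged slightly to a bounded set containing all shifted points for $n$ large),
\[
\bigl|\partial_{\bs h}^{(m,m)}\bigl[K(|M\bs s - M\bs t|)\bigr]\bigr| \lesssim |\bs s - \bs t|^{2\nu - 2m}
\]
for $\bs s \neq \bs t$. Evaluating at $\bs s = \bs i + H\bs\xi/n$, $\bs t = \bs j + H\bs\eta/n$ gives $|\bs s - \bs t| = |\bs i - \bs j + H(\bs\xi - \bs\eta)/n|$, and since $2\nu - 2m < 0$ the exponent makes this a supremum bound over $\bs\xi,\bs\eta \in [0,1]^m$. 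This is exactly where Lemma \ref{suprem} enters: under the hypothesis $|\bs i - \bs j| > |(m+1)\bs h/n|$ it yields $\sup_{\bs\xi,\bs\eta}|\bs i - \bs j + H(\bs\xi-\bs\eta)/n|^{2\nu-2m} \lesssim |\bs i - \bs j|^{2\nu-2m}$. Pulling the $(1/n)^{2m}$ out front and integrating the (now uniformly bounded) integrand over the unit cube $[0,1]^m \times [0,1]^m$ of finite measure, I get $|\Sigma(\bs i,\bs j)| \lesssim n^{-2m}|\bs i - \bs j|^{2\nu - 2m}$ for all sufficiently large $n$ and all such $\bs i,\bs j$, which is the first claim. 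The existence of a single $N$ working uniformly comes from Lemma \ref{boundonderiv}'s "$\bs s,\bs t \in \Omega$" being a fixed bounded set: for $n$ large all shifted points lie in a fixed slightly-enlarged bounded set.

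For the near regime, $|\bs i - \bs j| \leq |(m+1)\bs h/n|$, I would not use the derivative bound at all but instead Cauchy--Schwarz: $|\Sigma(\bs i,\bs j)| \leq \mathsf E(\Delta_{\bs h/n}^m Y(\bs i))^2{}^{1/2}\,\mathsf E(\Delta_{\bs h/n}^m Y(\bs j))^2{}^{1/2}$, and by Lemma \ref{FirstLemma} (whose hypothesis $m > \nu$ — in fact we may invoke it whenever $m>\nu$ via the $a_\nu^m$ term, but if one wants the full expansion take $m>\nu+1$; the leading term alone suffices here) each factor is $\mathsf E(\Delta_{\bs h/n}^m Y(\bs t))^2 = a_\nu^m n^{-2\nu} + o(n^{-2\nu}) \lesssim n^{-2\nu}$. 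Hence $|\Sigma(\bs i,\bs j)| \lesssim n^{-2\nu}$ uniformly, giving the second claim. The main obstacle is the far-regime estimate: one must be careful that Lemma \ref{boundonderiv} applies at the shifted arguments (so the enlarged-$\Omega$ bookkeeping and the choice of $N$ matter), and that the combination of Lemma \ref{boundonderiv} (pointwise derivative bound) with Lemma \ref{suprem} (uniformizing the singular factor over the integration cube) correctly produces the clean $n^{-2m}|\bs i-\bs j|^{2\nu-2m}$ rate rather than something weaker; the near-regime bound is routine.
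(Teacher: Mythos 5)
Your proposal is correct and follows essentially the same route as the paper: the integral representation of the iterated increment as $n^{-2m}$ times an integral of $\partial_{\bs h}^{(m,m)}F$ over $[0,1]^m\times[0,1]^m$, bounded via Lemma \ref{boundonderiv} and Lemma \ref{suprem} in the far regime, and Cauchy--Schwarz with the leading term of Lemma \ref{FirstLemma} in the near regime. You even anticipate the paper's own remark that the near-diagonal bound only needs the $a_\nu^m n^{-2\nu}$ leading order, which holds for $m>\nu$ rather than $m>\nu+1$.
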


\begin{proof} First notice that $\Sigma(\bs i,\bs j)=\mathsf E \Delta_{\bs h/n}^m Y(\bs i) \Delta_{\bs h/n}^m Y(\bs j)= \Delta_{\bs h/n}^{(m,m)} K(|M(\bs i-\bs j)|)$ where $K$ is the isotropic Mat\'ern autocovariance function defined in (\ref{DefineK}). To simplify the notation let $F(\bs i,\bs j)\triangleq K(|M(\bs i-\bs j)|)$ and $H$ be the $d$ by $m$ matrix defined in (\ref{DefineH}). An induction argument on $m$ establishes that when $|\bs i-\bs j|> (m+1)|\bs h|/n$ we can express directional increments as  integrals of directional derivatives so that
\[\Delta_{\bs h/n}^{(m,m)} F(\bs i,\bs j)= \frac{1}{n^{2m}}\int_{\bs\xi,\bs \eta \in [0,1]^m}(\partial_{\bs h}^{(m,m)} F) (\bs i+ H\bs \xi/n, \bs j + H\bs \eta/n) d\bs \xi d\bs \eta.\]
Therefore
\begin{align*}
|\Sigma(\bs i,\bs j)|
&\lesssim \frac{1}{n^{2m}} \int_{\bs\xi,\bs\eta \in [0,1]^m}  \bigl| (\partial_{\bs h}^{(m,m)} F) (\bs i+ H\bs \xi/n, \bs j + H\bs \eta/n)  \bigr|  d\bs \xi d\bs \eta \\
&\lesssim \frac{1}{n^{2m}} \int_{\bs\xi,\bs \eta \in [0,1]^m}  \bigl|\bs {i-j} + H (\bs \xi - \bs \eta)/n \bigr|^{2\nu-2m} d\bs \xi d\bs \eta, \quad\text{by Lemma \ref{boundonderiv}} \\
&\lesssim \frac{1}{n^{2m}} \sup_{\bs {\xi,\eta} \in [0,1]^m}\bigl|\bs {i-j} + H (\bs \xi - \bs \eta)/n \bigr|^{2\nu-2m}\\
&\lesssim  \frac{1}{n^{2m}}\bigl |\bs {i-j}\bigr|^{2\nu-2m}\!,\quad\text{by Lemma \ref{suprem}}
\end{align*}
for all $n>N$, $\bs i,\bs j\in \Omega_n$ such that  $|\bs i-\bs j|> |(m+1)\bs h/n|$.
On the other hand when  $|\bs i-\bs j|\leq |(m+1)\bs h/n|$ 
\begin{align}
\label{eeiik}
|\Sigma(\bs i,\bs j)|&\leq \sqrt{\mathsf E(\Delta_{\bs h/n}^m Y(\bs i))^2}\sqrt{ \mathsf E(\Delta_{\bs h/n}^m Y(\bs j))^2} 
\lesssim n^{-2\nu}
\end{align}
where the last inequality is by Lemma  \ref{FirstLemma}.
 Actually, a direct application of Lemma \ref{FirstLemma} only establishes  (\ref{eeiik}) when $m>\nu+1$. However, a small adjustment  of the proof of Lemma \ref{FirstLemma} establishes that $\mathsf E(\Delta_{\bs h/n}^{m} Y(\bs t))^2 = \frac{a_\nu^{m}}{n^{2\nu}}+o(n^{-2\nu})$ as $n\rightarrow \infty$ when $m>\nu$. This is then is sufficient to establish (\ref{eeiik}). 
\end{proof}

\begin{lemma} 
\label{lalalemma}
Let $\Sigma_\text{abs}$ be the component-wise absolute value of the covariance matrix $\Sigma$ (defined in (\ref{covmat})). Then under the same assumptions as in Lemma \ref{bounds1}, there exits an $N>0$ such that
\[\|\Sigma_{abs}\|_2\lesssim  n^{-2\nu} + c\,{n^{d-2m}}\int_{1/n}^1 r^{2\nu-2m+d-1}dr.  \]
for all $n>N$, where $c$ is a constant and $\|\cdot\|_2$ is the spectral norm. \end{lemma}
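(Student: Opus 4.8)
The plan is to bound the spectral norm of $\Sigma_\text{abs}$ by the largest absolute row sum of $\Sigma$, and then estimate that row sum by splitting the summation index $\bs j$ into a near-diagonal part, handled by the second estimate of Lemma~\ref{bounds1}, and a far part, handled by the first estimate together with a count of lattice points in concentric spherical shells. First I would observe that $\Sigma_\text{abs}$ is symmetric with nonnegative entries, so $\|\Sigma_\text{abs}\|_2=\rho(\Sigma_\text{abs})\le\|\Sigma_\text{abs}\|_\infty$, i.e.
\[ \|\Sigma_\text{abs}\|_2\ \le\ \max_{\bs i\in\Omega_n}\ \sum_{\bs j\in\Omega_n}\bigl|\Sigma(\bs i,\bs j)\bigr|. \]
Then, fixing $\bs i\in\Omega_n$ and taking $N$ as in Lemma~\ref{bounds1}, I would split this sum according to whether $|\bs i-\bs j|\le(m+1)|\bs h|/n$ or $|\bs i-\bs j|>(m+1)|\bs h|/n$.

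For the near-diagonal block, the rescaled lattice $\Bbb Z^d/n$ contains only a bounded number of points (a constant $c_0=c_0(d,m,\bs h)$ independent of $n$) inside a ball of radius $(m+1)|\bs h|/n$, so the second bound of Lemma~\ref{bounds1} yields a contribution $\lesssim n^{-2\nu}$. For the far block, boundedness of $\Omega$ gives $|\bs i-\bs j|\le D\triangleq\operatorname{diam}\Omega$, and I would organize these $\bs j$ into the shells $k/n<|\bs i-\bs j|\le(k+1)/n$ for integers $1\le k\le\lceil Dn\rceil$ (any finitely many points with $|\bs i-\bs j|=O(1/n)$ contributing $\lesssim n^{-2\nu}$ by either bound of Lemma~\ref{bounds1}). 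Since the $k$-th shell contains $\lesssim k^{d-1}$ lattice points and the first bound of Lemma~\ref{bounds1} gives $|\Sigma(\bs i,\bs j)|\lesssim n^{-2m}(k/n)^{2\nu-2m}$ there, the far block is
\[ \lesssim\ n^{-2m}\sum_{k=1}^{\lceil Dn\rceil} k^{d-1}\Bigl(\frac{k}{n}\Bigr)^{2\nu-2m}\ =\ n^{-2\nu}\sum_{k=1}^{\lceil Dn\rceil} k^{\,2\nu-2m+d-1}. \]
The change of variables $r=k/n$, together with the usual comparison of a monotone sum against the corresponding integral (and the observation that replacing the cutoffs $(m+1)|\bs h|/n$ and $D$ by $1/n$ and $1$ only alters the multiplicative constant), identifies this last expression with $c\,n^{d-2m}\int_{1/n}^1 r^{\,2\nu-2m+d-1}\,dr$, and adding the near-diagonal bound finishes the proof.

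The only delicate point is the bookkeeping in that final identification: one must check that the shell counts are $O(k^{d-1})$ uniformly in $n$, and that the harmless changes of cutoff constants are indeed absorbed into $c$ in each of the three regimes where the exponent $2\nu-2m+d-1$ is less than, equal to, or greater than $-1$ (these regimes are exactly what later produce the trichotomy in Theorem~\ref{FristThem}). Everything else is an immediate application of Lemma~\ref{bounds1} and the elementary bound $\rho(A)\le\|A\|_\infty$ for symmetric $A$.
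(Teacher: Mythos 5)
Your proof is correct and follows essentially the same route as the paper: bound $\|\Sigma_{\text{abs}}\|_2$ by the maximal $\ell_1$ row norm, split at $|\bs i-\bs j|=(m+1)|\bs h|/n$, apply the two bounds of Lemma~\ref{bounds1}, and convert the off-diagonal sum into the integral. The only difference is presentational: you carry out the sum-to-integral comparison explicitly via spherical shell counts, whereas the paper outsources exactly that step to a cited lemma (equation (\ref{sumtoint})).
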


\begin{proof}
First note that by symmetry, $\| \Sigma_{abs}\|_2\leq\sqrt{ \| \Sigma_{abs} \|_1\|\Sigma_{abs}\|_\infty}=\|\Sigma_{abs}\|_\infty$, where $\| \Sigma_{abs} \|_\infty$ is the maximum of the $\ell_1$ row norms and $\| \Sigma_{abs} \|_1$ is the maximum of the $\ell_1$ column norms. To bound the $\ell_1$ row norms, we bound the terms of the sum when $|\bs i-\bs j|>(m+1)|\bs h|/n$ and $|\bs i-\bs j|\leq (m+1)|\bs h|/n$ separately. 
For the off-diagonal terms we use Lemma \ref{bounds1} to ensure the existence of an $N>0$ such that for all $n>N$
\begin{align}
\max_{\bs i\in\Omega_n}\sum_{\substack{\bs j\in \Omega_n \\  |\bs i-\bs j|>(m+1)|\bs h|/n}}|\Sigma(\bs i,\bs j)| &\lesssim \max_{\bs i\in\Omega_n}\sum_{\substack{\bs j\in \Omega_n \\|\bs i-\bs j|>(m+1)|\bs h|/n}} n^{-2m} |\bs i-\bs j|^{2\nu-2m}  \label{3ee}\\
&\lesssim {n^{d-2m}} \int_{1/n}^1  r^{2\nu-2m+d-1} dr  \label{ee3}.
\end{align}
The last inequality, (\ref{ee3}), follows by the fact that for any constant $a>0$ and  open set $\Theta\subset \Bbb R^d$ which is bounded and contains the origin, one has 
\begin{equation} 
\label{sumtoint}
\sum_{\substack{\bs i\in\Theta\cap \{\Bbb Z^d/n\} \\ |\bs i|>a/n }} n^{-d} |\bs i|^\beta \lesssim  \int_{1/n}^{1} r^{\beta+d-1}dr
\end{equation}
 as $n\rightarrow\infty$ (for details see \cite{Anderes:thesis}, Lemma 3, page 41). In addition, by Lemma  \ref{bounds1}
\begin{align}
\max_{\bs i\in\Omega_n}\sum_{\substack{\bs j\in \Omega_n \\ |\bs i-\bs j|\leq (m+1)|\bs h|/n}} |\Sigma(\bs i,\bs j)| &\lesssim    n^{-2\nu}   \label{ee4} 
\end{align}
for all $n>N$.  This establishes the proof by noticing that the sum of the last terms in (\ref{ee3}) and (\ref{ee4}) bound $\| \Sigma_{abs} \|_\infty$. 
\end{proof}

\begin{lemma}
\label{boundd}
Under the same assumptions as in Lemma \ref{bounds1} there exits an $N>0$ such that 
\[ \|\Sigma\|_F^2 \lesssim {n^{d-4\nu}} + c\, n^{2d-4m}\int_{1/n}^1 r^{4\nu-4m+d-1}dr  \]
for all $n>N$ where $c$ is a constant and $\|\cdot\|_F$ denotes the Frobenious matrix norm. 
\end{lemma}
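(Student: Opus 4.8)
The plan is to estimate $\|\Sigma\|_F^2=\sum_{\bs i,\bs j\in\Omega_n}\Sigma(\bs i,\bs j)^2$ directly from the pointwise bounds of Lemma \ref{bounds1}, using the same dichotomy as in the proof of Lemma \ref{lalalemma}: split the double sum according to whether $|\bs i-\bs j|\leq(m+1)|\bs h|/n$ or $|\bs i-\bs j|>(m+1)|\bs h|/n$. The only structural difference from Lemma \ref{lalalemma} is that the Frobenius norm retains the outer sum over $\bs i$ rather than a maximum, so each of the two blocks acquires an extra factor $\#\Omega_n\lesssim n^d$.

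For the near-diagonal block, Lemma \ref{bounds1} gives $|\Sigma(\bs i,\bs j)|\lesssim n^{-2\nu}$ whenever $|\bs i-\bs j|\leq(m+1)|\bs h|/n$ and $n>N$, hence $\Sigma(\bs i,\bs j)^2\lesssim n^{-4\nu}$. For each fixed $\bs i\in\Omega_n$ there are only $O(1)$ grid points $\bs j\in\Omega_n$ within grid-distance $(m+1)|\bs h|/n$, and $\#\Omega_n\lesssim n^d$, so this block contributes $\lesssim n^d\cdot n^{-4\nu}=n^{d-4\nu}$.

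For the off-diagonal block, Lemma \ref{bounds1} gives $\Sigma(\bs i,\bs j)^2\lesssim n^{-4m}|\bs i-\bs j|^{4\nu-4m}$ for $n>N$. Fixing $\bs i$, the substitution $\bs k=\bs j-\bs i$ (which ranges over $(\Omega-\Omega)\cap\{\Bbb Z^d/n\}$, a fixed bounded grid set containing the origin) together with the sum-to-integral bound (\ref{sumtoint}) applied with exponent $\beta=4\nu-4m$ yields
\[ \sum_{\substack{\bs j\in\Omega_n\\ |\bs i-\bs j|>(m+1)|\bs h|/n}}|\bs i-\bs j|^{4\nu-4m}\lesssim n^{d}\int_{1/n}^1 r^{4\nu-4m+d-1}\,dr \]
with a constant that does not depend on $\bs i$. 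Multiplying by $n^{-4m}$ and summing over $\bs i\in\Omega_n$ (a further factor $\lesssim n^d$) bounds this block by $n^{2d-4m}\int_{1/n}^1 r^{4\nu-4m+d-1}\,dr$. Adding the two blocks gives the claimed inequality; the integral is left unevaluated so that no case distinction on the sign of $4\nu-4m+d$ is required.

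Given Lemmas \ref{bounds1} and \ref{lalalemma} the computation is routine. The only point that deserves a moment's care is the uniformity in $\bs i$ of the constant in (\ref{sumtoint}), which is why one first passes to the fixed difference set $\Omega-\Omega$ before invoking that estimate; beyond this there is no real obstacle.
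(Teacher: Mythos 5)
Your proposal is correct and follows essentially the same route as the paper: the same split of $\|\Sigma\|_F^2$ into near-diagonal and off-diagonal blocks, the pointwise bounds of Lemma \ref{bounds1}, and the sum-to-integral estimate (\ref{sumtoint}) for the off-diagonal part. Your explicit passage to the difference set $\Omega-\Omega$ to get uniformity in $\bs i$ just spells out what the paper calls a ``slight variation'' on (\ref{sumtoint}).
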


\begin{proof} First note that $\|\Sigma\|_F^2=\sum_{\bs i,\bs j\in\Omega_n}|\Sigma(\bs i,\bs j)|^2$.  As in the proof of Lemma \ref{boundd} we bound the near-diagonal terms of $\Sigma$ separately from the off-diagonal terms.
By Lemma \ref{bounds1} there exists an $N>0$ such that 
\begin{align}
\sum_{\substack{\bs i,\bs j\in \Omega_n\\  |\bs i-\bs j|>(m+1)|\bs h|/n }} |\Sigma(\bs i,\bs j)|^2&\lesssim  n^{2d-4m} \sum_{\substack{\bs i,\bs j\in \Omega_n\\  |\bs i-\bs j|>(m+1)|\bs h|/n  }} n^{-2d} |\bs i-\bs j|^{4\nu-4m}\\
&\lesssim   n^{2d-4m}\int_{1/n}^1  r^{4\nu-4m+d-1} dr \label{ee1}
\end{align}
for all $n>N$. Notice that the last inequality is a slight variation on (\ref{sumtoint}).  
For the near diagonal terms we also use Lemma \ref{bounds1} to get
\begin{align}
\sum_{\substack{\bs i,\bs j\in \Omega_n  \\ |\bs i-\bs j|\leq (m+1)|\bs h|/n }} |\Sigma(\bs i,\bs j)|^2&\lesssim    n^d n^{-4\nu}. \label{ee2} 
\end{align}
Adding (\ref{ee1}) and (\ref{ee2}) establishes the lemma.
\end{proof}

\begin{proof}[{\rm \bf Proof of Theorem 1}]
Define the random vector $\Delta Y$ to be the vector of $m$-increments, the components of which are indexed by $\Omega_n$ (in any order), so that
\begin{equation}
\label{vecdef}
\Delta Y\triangleq \underbrace{\bigl(\ldots,  \Delta_{\bs h/n}^m Y(\bs j),\ldots\bigr)}_{\text{terms are indexed by $\bs j\in\Omega_n$}}.
\end{equation}
Now we can write $Q_n^m=\frac{n^{2\nu}}{\# \Omega_n} \Delta Y \Delta Y^T=\frac{n^{2\nu}}{\# \Omega_n} W \Sigma W^T$, where $W\sim \mathcal N (\bs 0, I)$ (note that $\Sigma$ is defined in  (\ref{covmat})).
Therefore $\text{var}\, Q_n^m=2\frac{n^{4\nu}}{(\#\Omega)^2}\|\Sigma \|^2_F$ and by Lemma \ref{boundd}
\begin{align*}
\frac{n^{4\nu}}{(\#\Omega)^2}\|\Sigma \|^2_F&\lesssim n^{-d}+ c\, n^{4\nu-4m} \int_{1/n}^1 r^{4\nu-4m+d-1}dr \\
& \lesssim \begin{cases}
n^{4(\nu-m)},& \text{if $4(\nu-m)>-d$;} \\
n^{-d}\log n,& \text{if $4(\nu-m)=-d$;} \\
n^{-d}, &  \text{if $4(\nu-m)<-d$}
\end{cases} 
\end{align*}
for all sufficiently large $n$. This establishes the  variance rates. 

For the almost sure convergence result  let  $\tilde\Sigma\triangleq\frac{n^{2\nu}}{\# \Omega_n}\Sigma_{abs}$ where $\Sigma_{abs}$ is the component-wise absolute value of $\Sigma$. The Hanson and Wright bound in \cite{hanson:bound} then gives
\begin{equation}
 \mathsf P(|Q_n^m-EQ_n^m|\geq \epsilon)\leq 2\exp\left(-\frac{c_1\epsilon}{\| \tilde\Sigma \|_2}\wedge\frac{c_2\epsilon^2}{\| \tilde\Sigma\|_F^2}  \right)
 \end{equation}
 for all $\epsilon>0$, where $c_1,c_2$ are positive constants  not depending on $n$ or $\tilde\Sigma$.
First notice that by Lemma  \ref{lalalemma} we get
\begin{align}
\| \tilde\Sigma\|_2=\frac{n^{2\nu}}{\# \Omega_n}  \|  \Sigma_{abs}   \|_2&\lesssim n^{-d} + c\,{n^{2\nu-2m}}\int_{1/n}^1 r^{2\nu-2m+d-1}dr  \\
&\lesssim
\label{ratess} 
 \begin{cases}
n^{2(\nu-m)},& \text{if $2(\nu-m)>-d$;} \\
n^{-d}\log n,& \text{if $2(\nu-m)=-d$;} \\
n^{-d}, &  \text{if $2(\nu-m)<-d$.}
\end{cases}
\end{align}
for sufficiently large $n$. Also notice that this implies that $\|\tilde\Sigma\|_F^2\lesssim \|\tilde\Sigma\|_2$ for sufficiently large $n$. Therefore for sufficiently small $\epsilon$, $\mathsf P(|Q_n^m-EQ_n^m|\geq \epsilon)\leq 2\exp\left(-c_2\epsilon^2/ \|\tilde\Sigma\|_2  \right)$.  Now the rates in (\ref{ratess}) and the Borel-Cantelli Lemma are sufficient to establish that  $Q_n^m-\mathsf E Q_n^m\rightarrow 0$, with probability one as $n\rightarrow \infty$. 
 By Lemma \ref{FirstLemma}, $\mathsf E Q_n^m\rightarrow a_\nu^m$ (a slight adjustment also proves the case when $m>\nu$ rather than $m>\nu+1$) which establishes the theorem.
\end{proof}


\begin{proof}[\rm \bf Proof of Theorem 2]
First notice that when $p,q>\nu+1$
\begin{equation}
\label{eeeps}
 \mathsf E n^2\left[Q^p_n -\frac{a^p_\nu}{a^q_\nu}Q^q_n\right]\rightarrow  \left[b^p_\nu-  \frac{a^p_\nu}{a^q_\nu} b^q_\nu\right] \end{equation}
as $n\rightarrow \infty$ by Lemma \ref{FirstLemma}.
To get almost sure convergence notice
\begin{align}
\mathsf P\Bigl( n^2\Bigl|\bigr[Q^p_n-\frac{a^p_\nu}{a^q_\nu}Q^q_n \bigr] &- \mathsf E\bigl[Q^p_n -\frac{a^p_\nu}{a^q_\nu}Q^q_n\bigr]\Bigr|\geq \epsilon\Bigr) \\
\label{bfTerm}
&\leq \mathsf P\bigl(|[Q^p_n-\mathsf EQ^p_n|\geq \epsilon/2 n^2\bigr) + \mathsf P\bigl(|{a^p_\nu}||Q^q_n-\mathsf EQ^q_n|\geq |{a^q_\nu}| \epsilon/2 n^2 \bigr)
\end{align}
We can again use the Hanson and Wright bound (\cite{hanson:bound}) and the rates derived in Theorem \ref{FristThem} to get 
\begin{equation}
\label{LastInnq}
 \mathsf P(|Q_n^p-\mathsf EQ_n^p|\geq \epsilon/2n^2)\leq 2\exp\left(-c\,n^{-4}\epsilon^2/\|\tilde\Sigma\|_2  \right)
 \end{equation}
for all sufficiently small $\epsilon>0$ where $c$ is a positive constant that doesn't depend on $n$ or $\tilde\Sigma$. By inspection of the rates in (\ref{ratess}) the Borel-Cantelli Lemma can be applied when $4<\min\{2p-2\nu,d\}$ so that  $Q_n^p-EQ_n^p\rightarrow 0$ with probability one as $n\rightarrow \infty$.  A similar result holds for the second term in (\ref{bfTerm}) using the fact that both $a_\nu^p$ and $a_\nu^q$ are non-zero by Lemma \ref{Pozz}. This, combined with convergence of the expectation in (\ref{eeeps}), completes the proof.
\end{proof}

\begin{proof}[\rm \bf Proof of Theorem 4]
First notice that for any $p> \delta_2/2$  
 \begin{align}
 \label{beyond1proof}
\mathsf E(\Delta_{\bs h/n}^{p} Y(\bs t))^2 &=  \Delta_{\bs h/n}^{(p,p)} \Bigl\{c_1 |\bs x-\bs y|^{\delta_1}  + c_2 |\bs x-\bs y|^{\delta_2}  \Bigr\}_{\bs x=\bs y=\bs t} \\
&= \frac{c_1 C_{p,\delta_1} }{n^{\delta_1}}+  \frac{c_2 C_{p,\delta_2}}{n^{\delta_2}}
\end{align}
 where 
$C_{p,\delta} \triangleq  |\bs h|^{\delta}\sum_{i,j=0}^p (-1)^{i+j}{p\choose i}{p\choose j} |i-j|^{\delta} $.  This follows since $Y$ is an intrinsic random function  of order  $\lfloor \delta_2/2 \rfloor$ with generalized autocovariance function $c_1|\cdot|^{\delta_1}+c_2|\cdot|^{\delta_2}$ and $\Delta_{\bs h/n}^p$ is an allowable linear combination of order $\lfloor \delta_2/2\rfloor$ (see \cite{chiles:book}). 
Now $Q_n^p$ is defined as in (\ref{DefofQQ}) with $\delta_1$ in place of $2\nu$ so that
\[
 Q_n^p \triangleq  \frac{1}{\#\Omega_n} \sum_{\bs j\in\Omega_n} n^{\delta_1}(\Delta_{\bs h/n}^p Y(\bs j))^2.
\]
For any integer $q,p>\delta_2/2$   we have that
$\mathsf E Q_n^p = c_1 C_{p,\delta_1}+ c_2 C_{p,\delta_2} n^{\delta_1-\delta_2}$ and 
$ n^{\delta_2-\delta_1}\mathsf E\bigl[  Q_n^p - \frac{C_{p,\delta_1}}{C_{q,\delta_1}} Q_n^q  \bigr]= c_2\bigl[  C_{p,\delta_2} - \frac{C_{p,\delta_1}}{C_{q,\delta_1}} C_{q,\delta_2}  \bigr]$. By a proof similar to Lemma \ref{Pozz} one can show that for any $p>\delta_2/2$ there exists a $q>p$ such that  $C_{q,\delta_1}\neq 0$ and $C_{p,\delta_2} - \frac{C_{p,\delta_1}}{C_{q,\delta_1}} C_{q,\delta_2} \neq 0$ (this uses the fact that $\delta_1$, $\delta_2$ are not even integers). This motivates the following definition
\begin{align}
\hat c_1&\triangleq Q_n^p/C_{p,\delta_1}\\
\hat c_2&\triangleq n^{\delta_2-\delta_1}\frac{  Q_n^p - \frac{C_{p,\delta_1}}{C_{q,\delta_1}} Q_n^q }{ C_{p,\delta_2} - \frac{C_{p,\delta_1}}{C_{q,\delta_1}} C_{q,\delta_2} }.
\end{align}
In what follows we show that $p>\delta_2/2$ implies $\hat c_1 \overset{a.s.}\longrightarrow c_1$ as $n\rightarrow \infty$. Moreover if  $2(\delta_2-\delta_1)< \min\{ 2p-\delta_1,d\}$ then
 there exists a $q>p$ such that  $\hat c_2 \overset{a.s.}\longrightarrow c_2$.

We start by letting $ \Sigma(\bs i,\bs j)\triangleq \mathsf E\bigl(  \Delta_{\bs h/n}^p Y(\bs i)  \Delta_{\bs h/n}^p Y(\bs j)\bigr)$ for all $\bs i,\bs j\in \Omega_n$ and  $\tilde\Sigma\triangleq\frac{n^{\delta_1}}{\# \Omega_n}\Sigma_{abs}$ where $\Sigma_{abs}$ is the component-wise absolute value of $\Sigma$. 
The Hanson and Wright bound in \cite{hanson:bound} gives
\begin{equation}
\label{HWagain}
 \mathsf P(|Q_n^p-EQ_n^p|\geq \epsilon)\leq 2\exp\left(-\frac{b_1\epsilon}{\| \tilde\Sigma \|_2}\wedge\frac{b_2\epsilon^2}{\| \tilde\Sigma\|_F^2}  \right)
 \end{equation}
 for all $\epsilon>0$, where $b_1,b_2$ are positive constants  not depending on $n$ or $\tilde\Sigma$.
Later in the proof we will show  that when $p>\delta_2/2$
\begin{equation}
\label{eeennn}
\|\tilde\Sigma\|_F^2\lesssim \|\tilde\Sigma\|_2 \lesssim \begin{cases}
n^{\delta_1-2p},& \text{if $0>\delta_1-2p>-d$;} \\
n^{-d}\log n,& \text{if $\delta_1-2p=-d$;} \\
n^{-d}, &  \text{if $\delta_1-2p<-d$.}
\end{cases}
\end{equation} 
for all sufficiently large $n$. First, however, we show this is sufficient for the almost sure convergence result. Equations  (\ref{HWagain})  and (\ref{eeennn}) immediately establishes that $\hat c_1 \overset{a.s.}\longrightarrow c_1$ when $p>\delta_2/2$ since $|Q_n^p-EQ_n^p|\overset{a.s.}\longrightarrow 0$ by Borel-Cantelli and $\mathsf E \hat c_1\rightarrow c_1$.
To see that $\hat c_2\overset{a.s.}\longrightarrow c_2$ notice that
\begin{align}
 \mathsf P(n^{\delta_2-\delta_1}|Q_n^p-EQ_n^p|\geq \epsilon)&\leq 2\exp\left(-\frac{b_2\epsilon^2}{n^{2(\delta_2-\delta_1)}\| \tilde\Sigma\|_2 } \right) 
\end{align}
for all sufficiently small $\epsilon>0$ and sufficiently large $n$. Therefore, by inspection of the rates in (\ref{eeennn}), one can show that  $n^{\delta_2-\delta_1}|Q_n^p-EQ_n^p|\overset{a.s.}\longrightarrow 0$ whenever $2(\delta_2-\delta_1)< \min\{ 2p-\delta_1,d\}$  and $p>\delta_2/2$. Since $\mathsf E \hat c_2=c_2$, this is sufficient to establish that there exists a $q>p$ such that $\hat c_2 \overset{a.s.}\longrightarrow c_2$ as $n\rightarrow \infty$ when $0<2(\delta_2-\delta_1)< \min\{ 2p-\delta_1,d\}$ and $p>\delta_2/2$.

Now to finish the proof we need to establish (\ref{eeennn}). We start by noticing that 
for any $p> \delta_2/2$ there exists a constant $c>0$ such that 
\begin{align*}
 \bigl|\partial_{\bs h}^{(p,p)} \text{cov}(Y(\bs s),Y(\bs t)) \bigr|  &=  \bigl|\partial_{\bs h}^{(p,p)}\bigl\{   c_1|\bs s-\bs t|^{\delta_1}+ c_2|\bs s-\bs t|^{\delta_2}\bigr\}\bigr| \\
 &\leq b_4 |\bs s-\bs t|^{\delta_1-2p}
\end{align*}
for all $\bs s,\bs t\in \Omega$ such that $\bs s\neq \bs t$ where $b_4$ is a positive constant.  Following the proofs of Lemma \ref{bounds1} we can then derive that  there exists an $N>0$ such that for any $p>\delta_2/2$ 
\[ |\Sigma(\bs i,\bs j)|\lesssim n^{-2p} |\bs i-\bs j|^{\delta_1-2p}\]
   for all $n>N$, and $\bs i,\bs j\in\Omega_n$ such that  $|\bs i-\bs j|> |(p+1)\bs h/n|$. Moreover,
\[ |\Sigma(\bs i,\bs j)|\lesssim n^{-\delta_1}\]
for all $n>N$ and   $\bs i,\bs j\in\Omega_n$ such that  $|\bs i-\bs j|\leq |(p+1)\bs h/n|$. Now by direct analogs to Lemma \ref{lalalemma}, Lemma \ref{boundd}  (by replacing $2\nu$ with $\delta_1$) we have 
\begin{align} 
\label{oness}
\|\tilde\Sigma\|_2 &\lesssim n^{-d} + b_5\, n^{\delta_1-2p} \int_{1/n}^1 r^{\delta_1-2p+d-1}dr \\
\label{twoss}
\|\tilde\Sigma\|_F^2 &\lesssim  n^{-d}+ b_6\, n^{2\delta_1-4p} \int_{1/n}^1 r^{2\delta_1-4p+d-1}dr  
\end{align}
where $b_5$, $b_6$ are positive constants. 
These equations hold when $p>\delta_2/2$ and are sufficient to establish (\ref{eeennn}).
This completes the proof. 
\end{proof}

\begin{proof}[\rm \bf Proof of Theorem 5] 
First notice that since $e^{-|\bs t|^\delta}= 1 -  |\bs t|^\delta + |\bs t|^{2\delta}/2+ O(|\bs t|^{3\delta})$ one can show that for any $p\geq 1$   
 \begin{align}
 \label{beyond1proof}
\mathsf E(\Delta_{\bs h/n}^{p} Y(\bs t))^2 &=  \Delta_{\bs h/n}^{(p,p)} \Bigl\{ \sigma^2\exp\left[ -\bigl|\alpha M\bs x-\alpha M\bs y\bigr|^{\delta} \right] \Bigr\}_{\bs x=\bs y=\bs t} \\
\label{vann}
 &=-\sigma^2 \alpha^\delta |M \bs h|^{\delta}\frac{ D_{p,\delta} }{n^{\delta}}+ \sigma^2\alpha^{2\delta}  |M \bs h|^{2\delta} \frac{D_{p,2 \delta}}{2n^{2\delta}} +O(n^{-3\delta})
\end{align}
 where 
$D_{p,\delta} \triangleq  \sum_{i,j=0}^p (-1)^{i+j}{p\choose i}{p\choose j} |i-j|^{\delta} $.  This follows by a proof exactly similar to that of Lemma \ref{FirstLemma}.

Now $Q_n^p$ is defined as in (\ref{DefofQQ}) with $\delta$ in place of $2\nu$ so that
\[
 Q_n^p \triangleq  \frac{1}{\#\Omega_n} \sum_{\bs j\in\Omega_n} n^{\delta}(\Delta_{\bs h/n}^p Y(\bs j))^2.
\]
Therefore
\begin{align}
\mathsf E Q_n^p &=-\sigma^2 \alpha^\delta |M \bs h|^{\delta} D_{p,\delta}+ O(n^{-\delta}) 
\end{align}
and
\begin{align} \mathsf En^{\delta}\bigl[  Q_n^p - \frac{D_{p,\delta}}{D_{q,\delta}} Q_n^q  \bigr]&= \frac{\sigma^2\alpha^{2\delta}  |M \bs h|^{2\delta}}{2}\bigl[  D_{p,2\delta} - \frac{D_{p,\delta}}{D_{q,\delta}} D_{q,2\delta}  \bigr]+O(n^{-\delta}).
 \end{align}
  By a proof similar to Lemma \ref{Pozz} one can show that for any $p\geq 1$ one has that $D_{q,\delta}\neq 0$ and if, additionally, $p>\delta$ there exists a $q>p$ such that and $D_{p,2\delta} - \frac{D_{p,\delta}}{D_{q,\delta}} D_{q,2\delta} \neq 0$ (this uses the fact that $2\delta$ is not an even integer). This motivates the following definition
\begin{align}
\label{imbd1}
\widehat{ \sigma^2 \alpha^\delta |M \bs h|^{\delta} }&\triangleq -Q_n^p/D_{p,\delta}\\
\label{imbd2}
\widehat{ \sigma^2 \alpha^{2\delta} |M \bs h|^{2\delta} }&\triangleq 2n^{\delta}\frac{  Q_n^p - \frac{D_{p,\delta}}{D_{q,\delta}} Q_n^q }{ D_{p,2\delta} - \frac{D_{p,\delta}}{D_{q,\delta}} D_{q,2\delta} }.
\end{align}
In what follows we show that for any $p\geq 1$ we have that $ \widehat{ \sigma^2 \alpha^\delta |M \bs h|^{\delta} }\overset{a.s.}\longrightarrow  \sigma^2 \alpha^\delta |M \bs h|^{\delta}$  as $n\rightarrow \infty$. Moreover, if
$2\delta<\min\{ 2p-\delta,d \}$ and $p>\delta$ (this is required to guarantee that $D_{p,2\delta}\neq 0$) there exists a $q>p$ such that  and $\widehat{ \sigma^2 \alpha^{2\delta} |M \bs h|^{2\delta} }\overset{a.s.}\longrightarrow  \sigma^2 \alpha^{2\delta} |M \bs h|^{2\delta} $.  Notice that this is sufficient to prove the theorem since $p>3\delta/2$ and $ 2\delta<d$ together imply that $2\delta<\min\{ 2p-\delta,d \}$ and $p>\delta$. 

We start by letting $ \Sigma(\bs i,\bs j)\triangleq \mathsf E\bigl(  \Delta_{\bs h/n}^p Y(\bs i)  \Delta_{\bs h/n}^p Y(\bs j)\bigr)$ for all $\bs i,\bs j\in \Omega_n$ and  $\tilde\Sigma\triangleq\frac{n^{\delta}}{\# \Omega_n}\Sigma_{abs}$ where $\Sigma_{abs}$ is the component-wise absolute value of $\Sigma$. 
The Hanson and Wright bound in \cite{hanson:bound} gives
\begin{equation}
\label{HWagain2}
 \mathsf P(|Q_n^p-EQ_n^p|\geq \epsilon)\leq 2\exp\left(-\frac{b_1\epsilon}{\| \tilde\Sigma \|_2}\wedge\frac{b_2\epsilon^2}{\| \tilde\Sigma\|_F^2}  \right)
 \end{equation}
 for all $\epsilon>0$, where $b_1,b_2$ are positive constants  not depending on $n$ or $\tilde\Sigma$.
Later in the proof we will show  that 
\begin{equation}
\label{eeennn2}
\|\tilde\Sigma\|_F^2\lesssim \|\tilde\Sigma\|_2 \lesssim  \begin{cases}
n^{\delta-2p},& \text{if $0>\delta-2p>-d$;} \\
n^{-d}\log n,& \text{if $\delta-2p=-d$;} \\
n^{-d}, &  \text{if $\delta-2p<-d$.}
\end{cases}
\end{equation} 
for all sufficiently large $n$. First, however, we show this is sufficient for the almost sure convergence result. Equations (\ref{HWagain2}) and (\ref{eeennn2})  immediately establishes that $ \widehat{ \sigma^2 \alpha^\delta |M \bs h|^{\delta} }\overset{a.s.}\longrightarrow  \sigma^2 \alpha^\delta |M \bs h|^{\delta}$ since $|Q_n^p-EQ_n^p|\overset{a.s.}\longrightarrow 0 $ by Borel-Cantelli and $\mathsf E \widehat{ \sigma^2 \alpha^\delta |M \bs h|^{\delta} }\rightarrow \sigma^2 \alpha^\delta |M \bs h|^{\delta}$. 
To see that $\widehat{ \sigma^2 \alpha^{2\delta} |M \bs h|^{2\delta} }  \overset{a.s.}\longrightarrow  \sigma^2 \alpha^{2\delta} |M \bs h|^{2\delta}$ notice that equations  (\ref{HWagain2}) and (\ref{eeennn2}) imply
\begin{align}
 \mathsf P(n^{\delta}|Q_n^p-EQ_n^p|\geq \epsilon)&\leq 2\exp\left(-\frac{b_2\epsilon^2}{n^{2\delta}\| \tilde\Sigma\|_2 } \right) 
\end{align}
for all sufficiently small $\epsilon>0$ and sufficiently large $n$. Therefore $n^{\delta}|Q_n^p-EQ_n^p|\overset{a.s.}\longrightarrow 0$ whenever $2\delta< \min\{2p-\delta,d \}$. Since $\mathsf E \widehat{ \sigma^2 \alpha^{2\delta} |M \bs h|^{2\delta} } \rightarrow  \sigma^2 \alpha^{2\delta} |M \bs h|^{2\delta}$, this is sufficient to establish a $q>p>\delta$ so that $\widehat{ \sigma^2 \alpha^{2\delta} |M \bs h|^{2\delta} }  \overset{a.s.}\longrightarrow  \sigma^2 \alpha^{2\delta} |M \bs h|^{2\delta}$.

Now to finish the proof we need to establish (\ref{eeennn2}). We start by noticing that 
for any $p\geq 1$  there exists a constant $c>0$ such that 
\begin{align*}
 \bigl|\partial_{\bs h}^{(p,p)} \text{cov}(Y(\bs s),Y(\bs t)) \bigr|  &=  \bigl|\partial_{\bs h}^{(p,p)}\Bigl\{  \sigma^2\exp\left[ -\bigl|\alpha M\bs x-\alpha M\bs y\bigr|^{\delta} \right]    \Bigr\}\bigr| \\
 &\leq b_4 |\bs s-\bs t|^{\delta-2p}
\end{align*}
for all $\bs s,\bs t\in \Omega$ such that $\bs s\neq \bs t$ where $b_4$ is a positive constant.  Following the proofs of Lemma \ref{bounds1} we can then derive that  there exists an $N>0$ such that for any $p\geq 1$ 
\[ |\Sigma(\bs i,\bs j)|\lesssim n^{-2p} |\bs i-\bs j|^{\delta-2p}\]
   for all $n>N$, and $\bs i,\bs j\in\Omega_n$ such that  $|\bs i-\bs j|> |(p+1)\bs h/n|$. Moreover,
\[ |\Sigma(\bs i,\bs j)|\lesssim n^{-\delta}\]
for all $n>N$ and   $\bs i,\bs j\in\Omega_n$ such that  $|\bs i-\bs j|\leq |(p+1)\bs h/n|$. Now by direct analogs to Lemma \ref{lalalemma}, Lemma \ref{boundd}  (by replacing $2\nu$ with $\delta$) we have 
\begin{align} 
\label{oness2}
\|\tilde\Sigma\|_2 &\lesssim n^{-d} + b_5\, n^{\delta-2p} \int_{1/n}^1 r^{\delta-2p+d-1}dr \\
\label{twoss2}
\|\tilde\Sigma\|_F^2 &\lesssim  n^{-d}+ b_6\, n^{2\delta-4p} \int_{1/n}^1 r^{2\delta-4p+d-1}dr  
\end{align}
where $b_5$, $b_6$ are positive constants. 
These equations hold when $p\geq 1$ and establish (\ref{eeennn2}). This completes the proof. 
\end{proof}

\bibliography{references}
\end{document}